\newcommand{\w}{\omega}
\newcommand{\1}{\mathds 1}
\newcommand{\p}{\partial}
\newcommand{\vp}{\varphi}
\newcommand{\n}{\nabla}
\newcommand{\G}{\mathcal{G} }
\newcommand{\V}{\mathcal{V} }
\newcommand{\E}{\mathcal{E} }
\newcommand{\mysum}[1][]{#1 \sum}%\textstyle
\newcommand{\real}{\mathbb{R}}
\newcommand{\realnonnegative}{{\mathbb{R}}_{\ge 0}}
\newcommand{\norm}[1]{\ensuremath{\| #1 \|}}
\newcommand{\until}[1]{[#1]}
\newcommand{\longthmtitle}[1]{\mbox{}\textup{\textsl{(#1):}}}
\newcommand{\map}[3]{#1:#2 \rightarrow #3}
\newcommand{\st}{\operatorname{subject \text{$\, \,$} to}}
\newcommand{\minimize}{\operatorname{minimize}}
\newcommand{\oprocendsymbol}{\hbox{$\bullet$}}
\newcommand{\oprocend}{\relax\ifmmode\else\unskip\hfill\fi\oprocendsymbol}
\renewcommand{\l}{\lambda}
\newcommand{\optimizer}[1][b]{P_{g}^{\text{opt}}(#1)}
\newcommand{\pproj}[2]{[#1]^+_{#2}}
\DeclareMathOperator{\col}{col}
\DeclareMathOperator{\diag}{diag}
\DeclareMathOperator{\blockdiag}{blockdiag}
\DeclareMathOperator*{\argmax}{arg\,max}
\DeclareMathOperator{\Sin}{\mathbf{sin}}
\DeclareMathOperator{\Cos}{\mathbf{cos}}
\newtheorem{theorem}{Theorem}[section]
\newtheorem{proposition}[theorem]{Proposition}
\newtheorem{corollary}[theorem]{Corollary}
\theoremstyle{remark}
\newtheorem{remark}[theorem]{Remark}
\theoremstyle{definition}
\newtheorem{definition}[theorem]{Definition}
\newcommand{\removelatexerror}{\let\@latex@error\@gobble}
\definecolor{new}{rgb}{0.55,0,0.55}
\newcommand{\thickhline}{%
  \noalign {\ifnum 0=`}\fi \hrule height 1pt
  \futurelet \reserved@a \@xhline
}
\newcolumntype{"}{@{\hskip\tabcolsep\vrule width 1pt\hskip\tabcolsep}}
\begin{document}

% \title{Coupling of price-bidding electricity markets with physical
% power networks\thanks{A preliminary version of this work has been
% submitted to the 2018 Power Systems Computation Conference
% as~\cite{PSCC18-TS-AC-CDP-AS-JC}.}}
% \title{Real-time market-based dispatch in power networks: competition and (frequency) stability\thanks{A preliminary version of this work has been submitted to the 2018 Power Systems Computation Conference as~\cite{PSCC18-TS-AC-CDP-AS-JC}.}}

\title{Frequency-driven market mechanisms for optimal dispatch in
  power networks\thanks{A preliminary version of this work has been
    submitted to the 2018 Power Systems Computation Conference
    as~\cite{PSCC18-TS-AC-CDP-AS-JC}.}}

\author{Tjerk~Stegink,~Ashish~Cherukuri,~Claudio~De~Persis,~Arjan~van~der~Schaft,~and~Jorge
  Cort\'es% <-this % stops a space
  % <-this % stops a space
  \thanks{This work is supported by the NWO (Netherlands Organisation
    for Scientific Research) \emph{Uncertainty Reduction in Smart
      Energy Systems} (URSES) programme and the ARPA-e \emph{Network
    Optimized Distributed Energy Systems} (NODES) program.}
  \thanks{T. W. Stegink and C. De Persis and A. J. van der Schaft are
    affiliated with the Jan C. Willems Center for Systems and Control,
    University of Groningen, the Netherlands.  {\tt\small
      \{t.w.stegink,
      c.de.persis,a.j.van.der.schaft\}@rug.nl}}% {\tt\small t.w.stegink@rug.nl}}% Nijenborgh 4
  % \thanks{A.J. van der Schaft is with the Johann Bernoulli Institute
  % for Mathematics and Computer Science, University of Groningen,
  % Nijenborgh 9, 9747 AG Groningen,
  % the Netherlands.  {\tt\small a.j.van.der.schaft@rug.nl}}%
  \thanks{A. Cherukuri is with the Automatic Control Laboratory, ETH
    Z\"{u}rich.  {\tt\small cashish@control.ee.ethz.ch}}
  \thanks{J. Cort\'es is with the Department of Mechanical and
    Aerospace Engineering, University of California, San Diego.
    {\tt\small cortes@ucsd.edu}} }% <-this % stops a space

% The paper headers
% \markboth{Journal of \LaTeX\ Class Files,~Vol.~14, No.~8, August~2015}%
% {Shell \MakeLowercase{\textit{et al.}}: Bare Demo of IEEEtran.cls for IEEE Journals}
% The only time the second header will appear is for the odd numbered pages
% after the title page when using the twoside option.
% 
% *** Note that you probably will NOT want to include the author's ***
% *** name in the headers of peer review papers.                   ***
% You can use \ifCLASSOPTIONpeerreview for conditional compilation here if
% you desire.

% If you want to put a publisher's ID mark on the page you can do it like
% this:
% \IEEEpubid{0000--0000/00\$00.00~\copyright~2015 IEEE}
% Remember, if you use this you must call \IEEEpubidadjcol in the second
% column for its text to clear the IEEEpubid mark.

% use for special paper notices
% \IEEEspecialpapernotice{(Invited Paper)}

% make the title area
\maketitle

% ABSTRACT
\begin{abstract}
  This paper studies real-time bidding mechanisms for economic
  dispatch and frequency regulation in electrical power networks. We
  consider a market administered by an independent system operator
  (ISO) where a group of strategic generators participate in a
  Bertrand game of competition.  Generators bid prices at which they
  are willing to produce electricity. Each generator aims to maximize
  their profit, while the ISO seeks to minimize the total generation
  cost and to regulate the frequency of the system.  We consider a
  continuous-time bidding process coupled with the swing dynamics of
  the network through the use of frequency as a feedback signal for
  the negotiation process. We analyze the stability of the resulting
  interconnected system, establishing frequency regulation and the
  convergence to a Nash equilibrium and optimal generation levels. The
  results are verified in the IEEE 14-bus benchmark case.
\end{abstract}

% Note that keywords are not normally used for peerreview papers.
% \begin{IEEEkeywords}
%   IEEE, IEEEtran, journal, \LaTeX, paper, template.
% \end{IEEEkeywords}

% For peer review papers, you can put extra information on the cover
% page as needed:
% \ifCLASSOPTIONpeerreview
% \begin{center} \bfseries EDICS Category: 3-BBND \end{center}
% \fi
% 
% For peerreview papers, this IEEEtran command inserts a page break and
% creates the second title. It will be ignored for other modes.
\IEEEpeerreviewmaketitle

%%%%%%%%%%%%%%%%%%%%%%%% INTRODUCTION %%%%%%%%%%%%%%%%%%%%%%%%%%
\section{Introduction}
% \marginjc{I think this intro paragraph puts the emphasis on the wrong
%   things. We do not deal much with aggregation. Instead, we deal with
%   breaking the hierarchy in a novel way: integrating market mechanisms
%   with physical dynamics of the power network.}

%   With the goal of integrating distributed energy sources (DERs)
%   into the grid, a hierarchical structure for the electricity market
%   is envisioned. At the lower layer, groups of DERs coordinate their
%   response under an generator (which acts as a large-scale producer
%   of electricity), while at the top level the independent system
%   operator (ISO) interacts with the generators to achieve cost
%   efficiency and stable operation of the grid
%   \cite{gkatzikis2013role}. In this set-up, generators compete with
%   each other to maximize their individual profit, while the ISO can
%   exploit the flexibility of the generators for adjusting the
%   generation levels to compensate for imbalances in the network. In
%   this paper we study the competition aspect among the generators in
%   combination with the challenge of achieving economic efficiency
%   and frequency stabilization.

Power generation dispatch is typically done in a hierarchical fashion,
where the different layers are separated according to their time
scales. Broadly, at the top layer economic efficiency is ensured via
market clearing and at the bottom layer frequency control and
regulation is achieved via primary and secondary controllers. However,
the intermittent and uncertain nature of distributed energy resources
(DERs) and their integration into the power grid represents a major
challenge to the current design. Of particular concern is the need to
maintain both frequency regulation and cost efficiency of regulation
reserves in the face of increasing fluctuations in renewables. To this
end, we propose an integrated dynamic market mechanism which combines
the real-time market and frequency regulation, allowing competitive
market players, including renewable generation, to negotiate
electricity prices while using the most recent information on the grid
frequency.

\subsubsection*{Literature review}
The combination of economic dispatch and frequency regulation has
received increasing attention in recent years. Various works have
sought to move beyond the traditional and compartmentalized
hierarchical control layers to instead simultaneously achieve
frequency stabilization and economic dispatch in power
networks~\cite{trip2016internal,zhangpapaautomatica,li2016connecting}
and microgrids \cite{cady2015distributed,dorfler2016breaking}. Along
this line of research, the various agents involved work cooperatively
towards the satisfaction of a common goal.  An alternative body of
research has investigated the use of price-based incentives for
economic generation- and demand-side management and frequency
regulation~\cite{alv_meng_power_coupl_market,DJS-MC-AMA:16,stegink2017unifying}.
To achieve these goals, these works consider dynamic pricing
mechanisms in conjunction with system dynamics of the power
network. We also adopt this approach, with the important distinction
that here we allow generators to bid in the market (hence, they are
price-setters rather than price-takers). This viewpoint results in a
Bertrand game of competition among the generators.  Our previous
work~\cite{cherukuri2017iterative,cherukuri2016decentralized} studied
this type of games established that iterative bidding can achieve
convergence to an optimal allocation of power generation, without
considering the effects on the dynamics of the power network. The
underlying assumption was that generation setpoints could be commanded
after convergence, which in practice poses a limitation, considering
the fast time-scales at which DERs operate.  Instead, this paper
proposes an online bidding scheme where the setpoints are updated
continuously throughout time to better cope with fast changes in the
network. In this way, we tackle simultaneously both frequency
regulation, optimal power dispatch and the competitive aspect among
the generators.

\subsubsection*{Statement of contributions}
We consider an electrical power network consisting of an independent
system operator (ISO) and a group of competitive generators. Each
generator seeks to maximize its individual profit, while the ISO aims
to solve the economic dispatch problem and regulate the
frequency. Since the generators are not willing to share their cost
functions, the ISO is unable to solve the economic dispatch
problem. Instead, it has the generators compete in a bidding market
where they submit bids to the ISO in the form of a price at which they
are willing to produce electricity. In return, the ISO determines the
power generations levels the generators have to meet. We analyze the
underlying Bertrand game among the generators and characterize the
Nash equilibria that correspond to optimal power dispatch termed
\emph{efficient Nash equilibria}. In particular, we establish the
existence of such efficient Nash equilibria and provide a sufficient
condition for its uniqueness.  We also propose a Nash equilibrium
seeking scheme in the form of a continuous-time bidding process that
captures the interaction between the generators and the ISO. In this
scheme, the generators adjust their bid based on their current bid and
the production level that the ISO requests from them with the aim to
maximize their profit.  At the same time, the ISO adjusts the
generation setpoints to minimize the total payment to the generators
while taking the power balance and frequency deviation into
account. Moreover, along the execution of the algorithm the
nonnegativity constraints on the bids and power generation quantities
are satisfied.  The use of the local frequency error as a feedback
signal in the negotiation process couples the ISO-generator
coordination scheme with the swing dynamics of the power network. We
show that each equilibrium of the interconnected system corresponds to
an efficient Nash equilibrium, optimal generation levels and zero
frequency regulation. We furthermore establish local convergence to
such an equilibrium by invoking a suitable invariance principle for
the closed-loop projected dynamical system.  Finally, the numerical
results on the IEEE 14-bus benchmark show fast convergence of the
closed-loop system to an optimal equilibrium, even under sudden
changes of the load and the cost functions.

%%%%%%%%%%%%%%%%%%%%%%%%%%%%%%%%%%%%%%%%%%%%%%%%%%
% \subsubsection*{Organization}
% The remainder of the paper is organized as
% follows. Section~\ref{sec:phys-power-netw} introduces the notation
% and the dynamic model of the power network.
% Section~\ref{sec:problem-statement} presents the problem statement
% and Section \ref{sec:exist-uniq-nash} discusses the existence and
% uniqueness of efficient Nash equilibria. Section
% \ref{sec:continuous-real-time} introduces the ISO-generator bidding
% scheme and shows the local convergence to the efficient equilibria
% of the interconnection with the swing equations.  Simulations
% illustrate the results in Section ... Finally, we gather our
% conclusions and ideas for future work in Section
% \ref{sec:conclusions}.
%%%%%%%%%%%%%%%%%%%%%%%%%%%%%%%%%%%%%%%%%%%%%%%%%%

\subsubsection*{Notation} 
Let $\mathbb R , \mathbb R_{\ge0} , \mathbb R_{>0}$
be the set of real, nonnegative real, and positive real numbers, respectively.
%For $m \in \integerspositive$, we denote $\until{m} = \{1, \dots, m\}=\mathcal I_m$.
We write the set $\{1,\ldots,n\}$ compactly as $\until{n}$. We denote
by $\1\in\mathbb R^n$ the vector whose elements are equal to 1. Given
a twice differentiable function $\map{f}{\real^n}{\real}$, its
gradient and its Hessian evaluated at $x$ is written as $\nabla f(x)$
and $\n^2f(x)$,
respectively. %In addition, we write $\nabla_if(x)=\frac{\p f}{\p x_i}(x)$. % The Hessian of a
% twice-differentiable function $f:\mathbb R^n\to\mathbb R$ is denoted
% by $\n^2f$.  For $v\in\mathbb R^n$, we let $\ddiag{v}\in\mathbb
% R^{n\times n}$ denote the diagonal matrix with entries
% $v_1,\ldots,v_n$ on the diagonal.
A twice continuously differentiable function $\map{f}{\real^n}{\real}$
is \emph{strongly convex} on $S\subset \real^n$ if it is convex and,
for some $\mu>0$, its Hessian satisfies $\n^2f(x)>\mu I$ for all $x\in
S$. For scalars $a,b\in\mathbb R$ we denote by $\pproj{a}{b}$ the
operator
\begin{align}\label{eq:projection}
  \pproj{a}{b}=
  \begin{cases}
    a & \text{if } b>0\\
    \max(a,0) & \text{if } b=0.
  \end{cases}
\end{align}
For vectors $a,b\in\real^n$, $\pproj{a}{b}$ denotes the vector whose $i$-th element is given by $\pproj{a_i}{b_i}$ for $i\in[n]$. For $A\in\mathbb R^{m\times n}$, the induced $2$-norm is denoted by $\norm{A}$. Given $v\in\mathbb R^n,\tau\in\mathbb R^{n\times n}$, we write $\|v\|_\tau:=\sqrt{v^T\tau v}$.  Given a set of numbers $v_1,v_2,\ldots,v_n \in \real$, $\col(v_1,\ldots,v_n)$ denotes the column vector $\begin{bmatrix} v_1, \dots, v_n \end{bmatrix}^T $ and likewise $\diag(v_1,\ldots,v_n)$ denotes the $n\times n$ diagonal matrix with entries $v_1,\ldots,v_n$ on the diagonal. For $u,v\in\mathbb R^n$ we write $u\perp v$ if $u^Tv=0$. We use the compact notational form $0\leq u\perp v\geq 0$ to denote the complementarity conditions $u\geq0,v\geq 0, u\perp v$. The notations $\Sin(.)$ and $\Cos(.)$ are used to represent the element-wise sine and cosine functions respectively.

% For a vector $v\in\mathbb R^n$ we write
% $[v]=\diag_{i\in\{1,\ldots,n\}}\{v_i\}$.

\section{Power network model and dynamics}\label{sec:phys-power-netw}
% Here we present basic concepts on the dynamics of an electrical power
% network. 
We consider an electrical power network consisting of $ n $ buses and
$m$ transmission lines. The network is represented by a connected and
undirected graph $ \G = (\V, \E) $, where nodes $ \V =\until{n}$
represent buses and edges $ \E \subset \V \times \V $ are the
transmission lines connecting the buses.  The edges are arbitrarily
labeled with a unique identifier in $[m]$ and the ends of each edge
are arbitrary labeled with ‘+’ and ‘-’.  The incidence matrix $D \in
\real^{n \times m}$ of the resulting directed graph~is
\begin{align*}
  D_{ik}=
  \begin{cases}
    +1 &\text{if $i$ is the positive end of edge $k$},\\
    -1 &\text{if $i$ is the negative end of edge $k$},\\
    0 & \text{otherwise.}
  \end{cases}
\end{align*}
Each bus $i$ represents a control area and is assumed to have one
generator and a load~$P_{di}$.  The dynamics at the buses is assumed
to be governed by the \emph{swing equations} \cite{powsysdynwiley},
given by
\begin{equation}
  \begin{aligned}
    \dot \delta&=\w\\
    M\dot \w&= -D\Gamma\Sin (D^T\delta)-A\w+P_g-P_d
  \end{aligned}\label{eq:swingdeltacomp}
\end{equation}
with $P_d=\col(P_{d1},\ldots,P_{dn})$.  Here
$\Gamma=\diag(\gamma_1,\ldots,\gamma_m)$, where
$\gamma_k=B_{ij}V_{i}V_{j}=B_{ji}V_{i}V_{j}$ and $k\in[m]$ corresponds
to the edge between nodes $i$ and~$j$.  Table~\ref{tab:par3SG}
presents a list of symbols employed in the
model~\eqref{eq:swingdeltacomp}.
\begin{table}[htb]\vspace*{-3mm}
  \begin{align*}
    \delta&\in \mathbb R^n && \text{voltage phase angle}  \\
    \w&\in\mathbb R^n & &  \text{frequency deviation w.r.t. the nominal frequency}  \\
    P_{g}&\in\realnonnegative^n & & \text{power generation} \\
    P_{d}&\in\realnonnegative^n  && \text{power load}  \\
    M&\in\mathbb R_{\ge0}^{n\times n} & & \text{diagonal matrix of
      moments of inertia}
    \\
    A&\in\realnonnegative^{n\times n}& & \text{diagonal matrix of
      asynchronous damping constants}
    \\
    V_{i}&\in\mathbb R_{>0}& & \text{voltage magnitude at bus } i
    \\ %\hline
    B_{ij}&\in\mathbb R_{>0}& & \text{negative of the susceptance of
      transmission line } (i,j)
  \end{align*}%\vspace*{-6mm}
  \caption{State variables and parameters of swing
    equations~\eqref{eq:swingdeltacomp}.}\label{tab:par3SG}
\end{table}

For the stability analysis carried out later, it is convenient to work
with the voltage phase angle differences $\vp=D_t^T\delta\in\mathbb
R^{n-1}$.  Here $D_t\in\mathbb R^{n\times (n-1)}$ is the incidence
matrix of an arbitrary tree graph on the set of buses $\until{n}$
(e.g., a spanning tree of the physical network). Furthermore, let $
U(\vp)=-\1^T\Gamma\Cos(D^TD_t^{\dagger T}\vp)$, where
$D_t^\dagger=(D_t^TD_t)^{-1}D_t^T$ denotes the Moore-Penrose
pseudo-inverse of $D_t$. Then the physical
system~\eqref{eq:swingdeltacomp} in the $(\vp,\w)$-coordinates takes
the form
\begin{equation}\label{eq:swingeqU}
  \begin{aligned}
    \dot \vp&=D_t^T\w\\
    M\dot \w&=-D_t\nabla U(\vp)-A\w+P_g-P_d,
  \end{aligned}
\end{equation}
where we observe that $D_tD_t^{\dagger}D=(I-\frac1n\1\1^T)D=D$.

\section{Problem description}\label{sec:problem-statement}
In this section we formulate the problem statement, introduce the
necessary game-theoretic tools and discuss the goals of the paper.  

\subsection{ISO-generator coordination}
Taking as starting point the electrical power network model described
in Section~\ref{sec:phys-power-netw}, here we outline the elements of
the ISO-generator coordination problem following the exposition
of~\cite{cherukuri2017iterative,cherukuri2016decentralized}.  Let
$\map{C_i}{\realnonnegative}{\realnonnegative}$ be the cost incurred
by generator $i \in \until{n}$ in producing $P_{gi}$ units of
power. We assume $C_i$ is strongly convex on the domain
$\realnonnegative$ and satisfies $\nabla C_i(0)\geq0$. Given the total
network cost
\begin{align}\label{eq:total-cost}
  C(P_g) :=\mysum_{i\in\until{n}}C_i(P_{gi})
\end{align}
and a power load $P_d$, the ISO seeks to solve the \emph{economic
dispatch (ED)} problem
\begin{subequations}\label{eq:ISO-OPF}
  \begin{align}
    \minimize & \quad C(P_g),
    \\
    \st & \quad \1^TP_g=\1^TP_d,\label{eq:ISO-pow-bal}
    \\
    & \quad P_g \geq 0,\label{eq:Pg-ge0}
  \end{align}
\end{subequations}
and, at the same time, to regulate the frequency of the physical power
network.  We assume the total load to be positive, i.e., $\1^TP_d>0$
such that \eqref{eq:ISO-OPF} is feasible. Since the constraints
\eqref{eq:ISO-pow-bal}~\eqref{eq:Pg-ge0} are affine, Slater's
condition holds implying that \eqref{eq:ISO-OPF} has zero duality
gap. We can also show that its primal-dual optimizer
$(P_g^*,\l^*,\mu^*)$ is unique by exploiting strong convexity of
$C$. We assume that for the power injection $P_g=P_g^*$, there exists
an equilibrium $(\bar \vp,\bar \w)$ of~\eqref{eq:swingeqU} that
satisfies $D^TD_t^{\dagger T}\bar\vp\in(-\pi/2,\pi/2)^m$. The latter
assumption is standard and is referred to as the \emph{security
  constraint} in the power systems literature \cite{powsysdynwiley}.

We note that the ISO cannot determine the optimizer of the ED problem
\eqref{eq:ISO-OPF} because generators are strategic and they do not
reveal their cost functions to anyone. Instead, the ISO operates a
market where each generator $i\in\until{n}$ submit a bid $b_i \in
\realnonnegative$ in the form of a price at which it is willing to
provide power.  Based on these bids, the ISO aims to find the power
allocation that meets the load and minimizes the total payment to the
generators.  Thus instead of solving the ED problem \eqref{eq:ISO-OPF}
directly, the ISO considers, given a bid $b \in \realnonnegative^n$,
the convex optimization problem
%
% \begin{subequations}\label{eq:ISOprob}
%  \begin{align}
%    \underset{P_g\ge0}{\minimize} & \quad b^TP_g  \label{eq:ISOobj-fun} 
%    \\
%    \st & \quad \1^TP_g=\1^TP_d . \label{eq:pow-bal-con}
%  \end{align}
%\end{subequations}
%
\begin{subequations}\label{eq:ISOprob}
  \begin{align}
    \minimize & \quad b^TP_g, \label{eq:ISOobj-fun} 
    \\
    \st & \quad \1^TP_g=\1^TP_d, \label{eq:pow-bal-con}
    \\
    & \quad P_g \ge 0.
  \end{align}
\end{subequations}
A fundamental difference between~\eqref{eq:ISO-OPF}
and~\eqref{eq:ISOprob} is that the latter optimization is linear and
may in general have multiple solutions.  Let $\optimizer$ be the
optimizer of \eqref{eq:ISOprob} the ISO selects given bids~$b$ and
note that this might not be unique. Knowing the ISO's strategy,
%
% \margints{Mention here Stackelberg game:?  Generators are leaders,
% ISO is follower. }
% 
each generator $i$ bids a quantity $b_i\geq0$ to maximize its payoff
\begin{equation}\label{eq:payoffgen}
  \Pi_i(b_i,P_{gi}^{\text{opt}}(b)):=
  P_{gi}^{\text{opt}}(b)b_i-C_i(P_{gi}^{\text{opt}}(b)),
\end{equation}
where $P_{gi}^{\text{opt}}(b)$ is the $i$-th component of the
optimizer $P_g^{\text{opt}}(b)$.  Note that this function is not
continuous in the bid~$b$.  Since each generator is strategic, we
analyze the market clearing, and hence the dispatch process explained
above using tools from game theory~\cite{TB-GJO:82,DF-JT:91}.

\subsection{Inelastic electricity market game}
We define the \emph{inelastic electricity market game} as
\begin{itemize}
\item Players: the set of generators $\until{n}$.
\item Action: for each player $i$, the bid $b_i\in\realnonnegative$.
\item Payoff: for each player $i$, the payoff $\Pi_i$ defined in
  \eqref{eq:payoffgen}.
\end{itemize}
\noindent In the sequel we interchangeably use the notation
$b\in\realnonnegative^n$ and $(b_i,b_{-i})\in\realnonnegative^n$ for
the bid vector, where $b_{-i}\in\realnonnegative^{n-1}$ represents the
bids of all players except $i$. We note that the payoff of generator
$i$ not only depends on the bids of the other players but also on the
optimizer $P_g^{\text{opt}}(b)$ the ISO selects. Therefore, the
concept of a Nash equilibrium is defined slightly differently compared
to the usual one.
\begin{definition}[Nash equilibrium \cite{cherukuri2016decentralized}]
  A bid profile $b^*\in\realnonnegative^n$ is a \emph{Nash equilibrium}
  of the inelastic electricity market game if there exists an optimizer
  $P_{g}^{\text{opt}}(b^*)$ of~\eqref{eq:ISOprob} such that for each
  $i\in\until{n}$,
\begin{align*}
  \Pi_i(b_i,P_{gi}^{\text{opt}}(b_i,b_{-i}^*))\leq
  \Pi_i(b_i^*,P_{gi}^{\text{opt}}(b^*))
\end{align*}
for all $b_i\in\realnonnegative$ with $ b_i\neq b_i^*$ and all
optimizers $P_{gi}^{\text{opt}}(b_i,b_{-i}^*)$ of \eqref{eq:ISOprob}
given bids $(b_i,b_{-i}^*)$.
\end{definition}
We are particularly interested in bid profiles for which the optimizer
of~\eqref{eq:ISO-OPF} is also a solution to~\eqref{eq:ISOprob}. This is
captured in the following definition.
\begin{definition}[Efficient bid and efficient Nash equilibrium]
  An \emph{efficient bid} of the inelastic electricity market is a bid
  $b^*\in\realnonnegative^n$ for which the optimizer $P_g^*$ of
  \eqref{eq:ISO-OPF} is also an optimizer of \eqref{eq:ISOprob}
  given bids $b=b^*$ and
  \begin{align}\label{eq:eff-bid}
    P_{gi}^*=\argmax_{P_{gi}\ge0}\{P_{gi} b^*_i-C_i(P_{gi})\}\ \text{ for each } i
    \in \until{n}.
  \end{align}
  A bid $b^*\in\realnonnegative^n$ is an \emph{efficient Nash
    equilibrium} of the inelastic electricity market game if it is an
  efficient bid and a Nash equilibrium.
\end{definition}

At the efficient Nash equilibrium, the optimizer of the ED problem
coincides with the production levels that maximize the individual
profits \eqref{eq:payoffgen} of the generators. This justifies
studying the efficient Nash equilibria.

\subsection{Paper objectives}
Given the problem setup described above, neither the ISO nor the
individual strategic generators are able to determine the efficient
Nash equilibrium a priori.  As a first objective, we are interested in
designing a Nash equilibrium seeking mechanism in the form of a
bidding process where the generators coordinate with the ISO to
dynamically update their bids and production levels, while respecting
the nonnegativity constraints throughout its execution.  Our second
objective is the characterization of the stability properties of the
interconnection of the bidding process
% between the bidding process that takes place to carry out the
% ISO-generator coordination and
with the physical dynamics of the power network.

% we start by showing the existence and uniqueness of the efficient
% Nash equilibrium of the inelastic electricity market game. We then
% propose a Nash equilibrium seeking bid update scheme and analyze its
% interconnection with the network dynamics.

%%%%%%%%%%%%%%%%%%%%%%%%%%%%%%%%%%%%%%%%%%%%%%%%%% 
\section{Existence and uniqueness of Nash
  equilibria}\label{sec:exist-uniq-nash}
%
% Here we establish the existence of an efficient Nash equilibrium of
% the inelastic electricity market game described in the previous
% section.  We also provide a condition under which it is unique.

In this section we establish existence of an efficient Nash
equilibrium and also provide a condition for its uniqueness.
% %
% \marginac{The following proposition is very similar to~\cite[Proposition
% 3.1]{cherukuri2016decentralized}. Therefore, we should highlight here what is
% different in the current result. Also, we can reduce the proof quite a bit
% by invoking certain assertions from that paper. The whole purpose is to
% avoid unnecessary repetition of arguments. 
% }
While~\cite{cherukuri2016decentralized} has established the existence
of one specific efficient Nash equilibrium, we provide in the
following result a characterization of all efficient Nash
equilibria.

\begin{proposition}\longthmtitle{Characterization of efficient Nash
    equilibria}\label{prop:exis-NE}
  Let $( P_{g}^*,\l^*,\mu^*)$ be the unique primal-dual optimizer of
  \eqref{eq:ISO-OPF},
  %
  % \marginac{We need to assume or state that the
  % problem~\eqref{eq:ISO-OPF} has zero duality gap. I'd prefer to
  % have it written in the previous section, in the paragraph
  % after~\eqref{eq:ISO-OPF}.}
  %	  
  that is, $P_{g}^*\in\real^n,\l^*\in\real,\mu^*\in\real^n$ satisfy
  the Karush-Kuhn-Tucker (KKT) conditions
  \begin{align}\label{eq:KKTcon-sc}
    \begin{aligned}
      \n C(P_g^*)&=\1\l^*+\mu^*, \qquad \1^TP_g^*=\1^TP_d, 
      \\
      0&\leq P_g^*\perp \mu^*\geq 0.
    \end{aligned}
  \end{align}
  %
  % \marginac{I would prefer to define the notation $\perp$ in the
  % prelims section.}
  %
  Suppose $P_{gi}^*>0$ for at least two distinct generators. Then, any
  $b^* \in \realnonnegative^n$ satisfying $\1\l^*\leq b^*\leq \nabla
  C(P_g^*)$ is an efficient Nash equilibrium of the inelastic
  electricity market game.
\end{proposition}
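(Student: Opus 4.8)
The plan is to unpack the hypothesis into a structural description of $b^*$ and then verify the two defining properties of an efficient Nash equilibrium in turn. Combining the complementary slackness $0 \leq P_g^* \perp \mu^* \geq 0$ with the stationarity relation $\n C(P_g^*) = \1\l^* + \mu^*$ in \eqref{eq:KKTcon-sc}, I would first note that every active generator, i.e.\ every $i$ with $P_{gi}^* > 0$, has $\mu_i^* = 0$ and hence $\nabla C_i(P_{gi}^*) = \l^*$; the sandwich $\l^* \leq b_i^* \leq \nabla C_i(P_{gi}^*) = \l^*$ then collapses to $b_i^* = \l^*$. For an inactive generator ($P_{gi}^* = 0$) the same bounds give only $\l^* \leq b_i^* \leq \nabla C_i(0)$. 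This dichotomy --- all active generators bid the common marginal price $\l^*$, while inactive ones bid between $\l^*$ and their marginal cost at zero --- is the fact that drives the whole argument.

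Next I would show $b^*$ is an efficient bid. That $P_g^*$ solves \eqref{eq:ISOprob} at $b = b^*$ follows from $b^* \geq \1\l^*$: every feasible $P_g$ obeys $b^{*T}P_g \geq \l^*\1^TP_g = \l^*\1^TP_d$, and $P_g^*$ attains this bound since $b_i^* P_{gi}^* = \l^* P_{gi}^*$ for all $i$ (because $b_i^* = \l^*$ wherever $P_{gi}^* > 0$, and the term vanishes otherwise). For the requirement \eqref{eq:eff-bid}, each map $P_{gi} \mapsto P_{gi} b_i^* - C_i(P_{gi})$ is concave, so its maximizer over $\realnonnegative$ is fixed by a first-order condition: for active $i$ the identity $b_i^* = \l^* = \nabla C_i(P_{gi}^*)$ certifies the interior maximizer $P_{gi}^*$, and for inactive $i$ the inequality $b_i^* \leq \nabla C_i(0)$ certifies the boundary maximizer $0$.

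The heart of the proof, and the step I expect to be most delicate, is the Nash property, which I would establish with the selection $P_g^{\text{opt}}(b^*) = P_g^*$. The key combinatorial feature of the linear program \eqref{eq:ISOprob} is that the ISO dispatches only the cheapest bidders: a generator bidding strictly above the minimum gets zero in every optimizer, a unique lowest bidder absorbs the entire load $\1^TP_d$, and tied lowest bidders share it arbitrarily. Here the hypothesis that at least two generators are active is essential --- it ensures that for each $i$ there is still an active competitor $j \neq i$ with $b_j^* = \l^*$, so $\min_{j \neq i} b_j^* = \l^*$ and generator $i$ can never price above $\l^*$ and still be dispatched. Fixing $i$, I would bound the deviation payoff across the regimes $b_i > \l^*$ (generator shut out, payoff $-C_i(0)$), $b_i = \l^*$ (any awarded $q$ gives $q\l^* - C_i(q)$), and $b_i < \l^*$ (generator dispatched at the full load, payoff $(\1^TP_d)b_i - C_i(\1^TP_d)$). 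In each regime the payoff is at most $\max_{q \geq 0}\{q\l^* - C_i(q)\}$, which by the efficient-bid analysis equals the equilibrium payoff $\Pi_i(b_i^*, P_{gi}^*)$: for active $i$ this maximum is attained at $P_{gi}^*$, and for inactive $i$ the bound $\l^* \leq \nabla C_i(0)$ forces the maximizer to be $0$, yielding $-C_i(0) = \Pi_i(b_i^*, 0)$. Assembling these three bounds gives the Nash inequality for every deviation and every optimizer. The main obstacle is precisely the multiplicity of optimizers of \eqref{eq:ISOprob}: the bound must hold simultaneously for all of them, which is why the concavity of each $C_i$ and the two-active-generator hypothesis have to be used in tandem rather than in isolation.
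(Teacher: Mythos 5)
Your proof is correct, and its skeleton matches the paper's: first establish that $b^*$ is an efficient bid, then verify the Nash property by analyzing unilateral deviations, with the two-active-generator hypothesis guaranteeing that every deviator still faces a competitor bidding exactly $\l^*$. The differences are in the execution, and they are worth noting. For efficiency, you prove optimality of $P_g^*$ in \eqref{eq:ISOprob} by a direct weak-duality bound ($b^{*T}P_g \geq \l^*\1^TP_d$ for every feasible $P_g$, with equality at $P_g^*$), whereas the paper verifies the KKT conditions of the linear program with multiplier $\nu^* = b^* - \1\l^*$; these are interchangeable. For the Nash property, you partition deviations into three regimes relative to the market price $\l^*$ and bound every regime by the single quantity $\max_{q\geq0}\{q\l^* - C_i(q)\}$, which you then identify with the equilibrium payoff; this requires you to characterize the LP optimizers at the deviated bid (shut out above $\l^*$, full load below, arbitrary share at a tie). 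The paper instead splits on $b_i$ versus $b_i^*$: for $b_i > b_i^*$ it uses the shut-out argument plus monotonicity of $q \mapsto qb_i^* - C_i(q)$ on $[0,P_{gi}^*]$, while for $b_i < b_i^*$ it uses the two-line chain $b_iP_{gi}^{\opt} - C_i(P_{gi}^{\opt}) \leq b_i^*P_{gi}^{\opt} - C_i(P_{gi}^{\opt}) \leq b_i^*P_{gi}^* - C_i(P_{gi}^*)$, which is agnostic to which optimizer the ISO selects and so never needs the "unique lowest bidder absorbs the entire load" structure. Your unified conjugate bound is arguably cleaner and makes the role of $\l^*$ as the market-clearing price explicit; the paper's low-deviation case is more economical because it sidesteps characterizing the deviated LP's solution set entirely. (Incidentally, your computation also makes plain that the payoff after an overbid is $-C_i(0)$, correcting the paper's typo ``$C(0)$'' in that step.)
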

%  \blue{
%   \begin{remark}[Zero duality gap]
%     Since constraints \eqref{eq:ISO-pow-bal}, \eqref{eq:Pg-ge0}, are
%     affine and the feasibility set is nonempty, the refined Slater's
%     condition is satisfied for \eqref{eq:ISO-OPF} and hence, the
%     duality gap between the primal and the dual optimization
%     problems is zero \cite{SB-LV:04}. Under this condition, a $(
%     P_{g}^*,\l^*,\mu^*)$ is a primal-dual optimizer of
%     \eqref{eq:ISO-OPF} if and only if it satisfies the KKT
%     conditions \eqref{eq:KKTcon-sc}.
%   \end{remark}}
\begin{proof}
  Let $( P_{g}^*,\l^*,\mu^*)$ satisfy \eqref{eq:KKTcon-sc}, then in
  particular $\1\l^*\leq \nabla C(P_g^*)$.  Fix any bid $b^* \in
  \realnonnegative^n$ satisfying $\1\l^*\leq b^*\leq \nabla C(P_g^*)$. 
  % where we note there exists at least one such $b^*$.
  We will now prove that $b^*$ is efficient. Define $\nu^*:=b^*-\1\l^*$
  and note that $(P_g^*,\l^*,\nu^*)$ satisfies
  \begin{align}\label{eq:KKTcon-lin-opt2}
    \begin{aligned}
      b^*&=\1\l^*+\nu^*, \qquad \1^T P_g^*=\1^TP_d
      \\
      0&\leq P_g^* \perp \nu^*\geq 0.
    \end{aligned}
  \end{align}
  We note that Slater's condition holds for~\eqref{eq:ISOprob} and its
  KKT conditions are given by~\eqref{eq:KKTcon-lin-opt2}. Consequently,
  $P_g^*$ is a primal optimizer of~\eqref{eq:ISOprob}. In addition,
  the bid $b^*$ satisfies
  \begin{align}\label{eq:eff-bid2}
    P_{gi}^*&=\argmax_{P_{gi}\geq0}\{P_{gi}b_i^*-C_i(P_{gi})\}\ \text{
      for each } i\in\until{n}.
  \end{align}
  This is true as for each $i \in \until{n}$, the following optimality
  conditions
  \begin{align*}
    \nabla C_i(P_{gi}^*)&=b_i^*+\eta^*_i, \qquad 
    0\leq P_{gi}^*\perp  \eta^*_i \geq 0,
  \end{align*}
  are satisfied for $\eta^*_i=\nabla C_i(P_{gi}^*)-b^*_i$. Note that
  in the above set of conditions, $P_{gi}^* \eta^*_i = 0$ because if
  $P_{gi}^* >0$, then $\nabla C_i(P^*_{gi}) = \l^* = b^*_i$.  Thus, we
  have established that $b^*$ is efficient.  In the remainder of the
  proof we show that $b^*$ is a Nash equilibrium.  Suppose generator
  $i$ deviates from the bid $b_i^*$. We distinguish between two cases.
  Suppose first that $b_i>b_i^*$, then by replacing $b^*$ by
  $(b_i,b_{-i}^*)$ in~\eqref{eq:ISOprob} and checking the optimality
  conditions, we obtain $P_{gi}^{\text{opt}}(b_i,b_{-i}^*)=0$ as, by
  assumption, there is at least one other generator $j$ such that
  $b_j^*=\l^*<b_i$. Without loss of generality assume that $P^*_{gi} >
  0$ since otherwise $\Pi_i(b_i^*,P_{gi}^*) =
  \Pi_i(b_i,P_{gi}^{\text{opt}}(b_i,b_{-i}^*))$. For $P^*_{gi} > 0$,
  we have $b^*_i = \nabla C_i(P^*_{gi})$ and therefore $\n
  C_i(P_{gi})\leq b^*_i$ for all $P_{gi} \in [0,P^*_{gi}]$. As a
  result
  \begin{align*}
    \Pi_i(b_i,P_{gi}^{\text{opt}}(b_i,b_{-i}^*))=C(0)\leq \Pi_i(b_i^*,P_{gi}^*)
  \end{align*}
  % Using this fact, we get
  % \begin{align*}
  %   \Pi_i(b_i^*,P_{gi}^*(b^*))&=b_i^*P^*_{gi}-C_i(P^*_{gi})
  %   \\
  %   &=\int_0^{P^*_{gi}}(b_i^*-\nabla C_i(P_{gi}))dP_{gi}-C(0)
  %   \\
  %   & \geq -C(0)=\Pi_i(b_i,P_{gi}^{\text{opt}}(b_i,b_{-i}^*)).
  % \end{align*}
  This shows that a bid $b_i > b_i^*$ does not increase its payoff. 
  Suppose now that $b_i<b_i^*$, then
  \begin{align*}
    &\Pi_i(b_i,P_{gi}^{\text{opt}}(b_i,b_{-i}^*)) =
    b_iP_{gi}^{\text{opt}}(b_i,b_{-i}^*)) -
    C_i(P_{gi}^{\text{opt}}(b_i,b_{-i}^*))
    \\
    &\leq b_i^* P_{gi}^{\text{opt}}(b_i,b_{-i}^*)) -
    C_i(P_{gi}^{\text{opt}}(b_i,b_{-i}^*))
    \\
    &\leq b_i^*P^*_{gi}-C_i(P^*_{gi})=\Pi_i(b_i^*,P^*_{gi})
  \end{align*}
  where the second inequality follows from \eqref{eq:eff-bid2} as $b^*$
  is efficient. Hence, each generator $i$ has no incentive to deviate from
  bid $b_i^*$ given $b_{-i}^*$. We conclude that $b^*$ is an efficient
  Nash equilibrium of the inelastic electricity market game. 
\end{proof}

The proof of Proposition~\ref{prop:exis-NE} shows that if
$P_{gi}^*>0$, then generator $i$'s efficient Nash equilibrium bid
$b_i^*$ is equal to the (unique) Lagrange multiplier $\l^*$ associated
to the power balance \eqref{eq:ISO-pow-bal}.
%
% \marginac{Next, you might
% 	want to say the efficient Nash equilibrium instead of just Nash
% 	equilibirum no? Third, the efficient Nash equilibrium is defined
% 	by the bids and so saying “at the efficient Nash equilibirum, if
% 	$\bar P_{gi} > 0$” is kind of ambiguous. }
%
%  The underlying assumption here is that at least two generators have
%  a positive production at the optimal generation levels. We assume
%  this condition holds for the remainder of the paper unless stated
%  otherwise.
In the other case that $P_{gi}^*=0$, generator $i$'s Nash equilibrium
bid is larger than or equal to $\l^*$. This represents the case that
generator $i$'s marginal costs at zero power production is larger than
or equal to the market clearing price, and hence generator $i$ is not
willing to produce any electricity in that case. The underlying
assumption in Proposition~\ref{prop:exis-NE} is that at least two
generators have a positive production at the optimal generation
levels. We assume this condition holds for the remainder of the paper
unless stated otherwise.

% As observed from the proof of
% Proposition~\ref{prop:exis-NE}, if the primal-dual optimizer
% $(P_g^*,\l^*,\mu^*)$ satisfies $P_g^*>0$, then there is a unique
% efficient bid $b^*=\nabla C(P_g^*)=\1\l^*$.
The previous observations lead to the identification of the same
sufficient condition as in~\cite{cherukuri2016decentralized} to
guarantee the uniqueness of the efficient Nash equilibrium, which we
state here for completeness.

\begin{corollary}[Uniqueness of the efficient Nash
  equilibrium~\cite{cherukuri2016decentralized}]
  Let $( P_{g}^*,\l^*,\mu^*)$ be the primal-dual optimizer
  of~\eqref{eq:ISO-OPF}
  %
  % \marginac{Same comment about zero duality gap as written in the
  % previous Proposition}
  % 
  and suppose that $P_g^*>0$, then $b^*=\nabla C(P_g^*)=\1\l^*$ is the
  unique efficient Nash equilibrium of the inelastic electricity
  market game.
\end{corollary}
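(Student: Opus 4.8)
The plan is to combine Proposition~\ref{prop:exis-NE} with the strict positivity hypothesis $P_g^*>0$, which simultaneously pins down the multiplier $\mu^*$ and collapses the family of admissible efficient bids to a single point. First I would read off from the complementarity condition in~\eqref{eq:KKTcon-sc} that $P_g^*>0$ forces $\mu^*=0$: for each $i\in\until{n}$ the relation $0\leq P_{gi}^*\perp\mu_i^*\geq0$ together with $P_{gi}^*>0$ gives $\mu_i^*=0$. Substituting into the stationarity condition $\n C(P_g^*)=\1\l^*+\mu^*$ then yields $\n C(P_g^*)=\1\l^*$, establishing the claimed equality.

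For existence, I would invoke Proposition~\ref{prop:exis-NE}. Since $P_g^*>0$ (and $n\ge2$, as in the network model) at least two generators have positive production, so the proposition applies and asserts that every $b^*$ with $\1\l^*\leq b^*\leq\n C(P_g^*)$ is an efficient Nash equilibrium. Because the previous step shows $\1\l^*=\n C(P_g^*)$, this interval degenerates to the single point $b^*=\1\l^*=\n C(P_g^*)$, which is therefore an efficient Nash equilibrium.

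The crux is uniqueness, for which I would argue directly from the efficiency condition~\eqref{eq:eff-bid} rather than from the proposition, since the latter only furnishes a \emph{sufficient} characterization. Let $b^*$ be any efficient Nash equilibrium; in particular it is an efficient bid, so $P_{gi}^*=\argmax_{P_{gi}\geq0}\{P_{gi}b_i^*-C_i(P_{gi})\}$ for each $i$. As $P_{gi}^*>0$ lies in the interior of $\realnonnegative$, the first-order optimality condition for this concave maximization is the equality $b_i^*-\n C_i(P_{gi}^*)=0$, which determines $b_i^*=\n C_i(P_{gi}^*)$ uniquely. Collecting over $i$ and using separability of $C$ gives $b^*=\n C(P_g^*)$, so every efficient bid, and hence every efficient Nash equilibrium, coincides with $\n C(P_g^*)=\1\l^*$. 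This is precisely the observation recorded in the remark following Proposition~\ref{prop:exis-NE}.

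I expect the main subtlety to lie in this uniqueness step: one cannot conclude uniqueness merely by noting that the interval in Proposition~\ref{prop:exis-NE} collapses, because that characterization is only sufficient and does not by itself exclude efficient bids outside the interval. The argument must instead show that no efficient bid can differ from $\n C(P_g^*)$, and this is exactly where strict positivity is essential: it removes the slack present when some $P_{gi}^*=0$ (where $b_i^*$ may exceed $\l^*$) that would otherwise admit an entire family of efficient bids. Once $P_g^*>0$ turns each generator's optimality condition into an equality, uniqueness follows immediately.
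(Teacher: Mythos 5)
Your proof is correct and follows essentially the same route as the paper: the paper states this corollary without a standalone proof, deriving it from the observations following Proposition~\ref{prop:exis-NE} (namely that any efficient bid must satisfy $b_i^*=\nabla C_i(P_{gi}^*)=\l^*$ whenever $P_{gi}^*>0$, via the first-order condition in~\eqref{eq:eff-bid} and complementary slackness), which is exactly what you formalize. Your explicit remark that uniqueness cannot be read off from the (merely sufficient) interval characterization in Proposition~\ref{prop:exis-NE}, but must be argued from the definition of an efficient bid, is a correct and worthwhile clarification of the paper's terser reasoning.
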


\begin{remark}[Any efficient Nash equilibrium is
  positive]\label{rem:pos-eff-NE}
  We observe from the optimality conditions \eqref{eq:KKTcon-sc} that,
  since $\1^TP_d>0$, and $P_g^*\geq0$, we must have that
  $P_{gi}^*>0$ and $\mu_i^*=0$ for some $i\in\until{n}$. As $\nabla
  C_i(P_{gi}^*)>0$ by the strict convexity of $C_i$ and the assumption
  $\nabla C_i(0)\ge0$, this implies that $\l^*>0$ and therefore also
  $b^*>0$. \oprocend
\end{remark}

\section{Interconnection of bid update scheme with power network
  dynamics}\label{sec:continuous-real-time}

In this section we introduce a Nash equilibrium seeking mechanism
between the generators and the ISO. Each generator dynamically updates
its bid based on the power generation setpoint received from the ISO,
while the ISO changes the power generation setpoints depending on the
generator bids and the frequency of the network. This update mechanism
of the bids and the setpoints is written as a continuous-time dynamical
system. We assume that each generator can only communicate with the ISO
and is not aware of the number of other generators participating, their
respective cost functions, or the load at its own bus. We study the
interconnection of the online bidding process with the power system
dynamics and establish local convergence to an efficient Nash
equilibrium, optimal power dispatch, and zero frequency deviation. 

\subsection{Price-bidding mechanism}\label{sec:bidding-scheme}

In our design, each generator $i\in\until{n}$ changes its bid $b_i \ge 0$
according to the projected dynamical system
\begin{subequations}\label{eq:bidding-process}
  \begin{align}\label{eq:produpdateb}
    \tau_{bi} \dot b_i&=\pproj{P_{gi}-\nabla C_i^*(b_i)}{b_i} ,
\end{align}
with gain $\tau_{bi}>0$. The projection operator in the above dynamics
ensures that trajectories starting in the nonnegative orthant remain
there.  
% Provided $b_i(0)\geq0$, the projection operator in the above
% dynamics ensures that $b_i(t) \ge 0$ for all $t \ge 0$ along the
% trajectory of~\eqref{eq:produpdateb}.  $\pproj{\cdot}{b_i}$ given by
% \eqref{eq:projection} ensures that $b(t)$ remains in the nonnegative
% orthant for all future time.
The map $\map{C_i^*}{\realnonnegative}{\realnonnegative}$ denotes the
\emph{convex conjugate} of the cost function $C_i$ and is defined as
\begin{align*}
  C_i^*(b_i):=\max_{P_{gi}\geq0}\{b_iP_{gi}-C_i(P_{gi})\}.
\end{align*}
Using tools from convex analysis~\cite[Section
I.6]{hiriart2013convex}, one can deduce that $C_i^*$ is convex and
continuously differentiable on the domain $\realnonnegative$ and
strictly convex on the domain $[\nabla C_i(0),\infty)$. Moreover, its
gradient satisfies $\nabla C_i^*(b_i)=\argmax_{P_{gi}\geq0}\{b_i
P_{gi}-C_i(P_{gi})\}$ for all $b_i\geq0$.

The motivation behind the update law \eqref{eq:produpdateb} is as
follows. Given the bid $b_i>0$, generator $i$ seeks to produce power
that maximizes its profit, which is given by
\begin{align*}
  P_{gi}^{\text{des}}=\nabla
  C_i^*(b_i)=\argmax_{P_{gi}\ge0}\{b_iP_{gi}-C_i(P_{gi})\}.
\end{align*}
However, if the ISO requests more power from the generator compared to
its desired quantity, i.e., $P_{gi}>P_{gi}^{\text{des}}$, then $i$
will increase its bid to increase its profit. On the other hand if
$P_{gi} < P_{gi}^{\text{des}}$, then $i$ will decrease its bid.
%It is interesting to notice that \eqref{eq:produpdateb} can be seen as
%a continuous-time version of the discrete-time dynamics proposed in
%\cite{cherukuri2017iterative,cherukuri2016decentralized}.

% Next, we provide an update law for the ISO depending on the bid
% $b\in\realnonnegative^n$ and the network frequency which is based on a
% projected primal-dual dynamics of \eqref{eq:ISOprob}, the ISO updates
% its actions according to the projected dynamical system

For the ISO we also provide an update law which depends on the
generator bids and the network frequency. This involves seeking a
primal-dual optimizer of \eqref{eq:ISOprob} or, equivalently, finding
a saddle-point of the augmented Lagrangian
\begin{align*}
  \mathcal L(P_g,\l)=b^TP_g+\l\1^T(P_d-P_g)+\rho\norm{\1^T(P_d-P_g)}^2 ,
\end{align*}
with parameter $\rho>0$. By writing the associated projected
saddle-point dynamics (see
e.g., \cite{cherukuri2017saddle,goebel2017stability}), the ISO
dynamics takes the form
% %
% \marginac{Possibly mention here or later after giving the ISO dynamics
% 	that one of the aims of the ISO is to drive the frequency
% deviation to zero. This shows up in the dynamics.}
%
% Based on the projected saddle-point dynamics of \eqref{eq:ISOprob} \cite{goebel2017stability},
% The ISO dynamics takes the form
%
% \marginac{We should explain in a sentence or two what is a primal-dual
% dynamics. Also give a reference.}
%
% By augmenting the Lagrangian of \eqref{eq:ISOprob},  the ISO dynamics takes the form 
%
\begin{equation}
  \begin{aligned}
    \tau_g\dot P_g&=\pproj{\1\l-b+\rho \1\1^T(P_d-P_g) -\sigma^2
      \w}{P_g} ,
    \\
    \tau_\l\dot \l&=\1^T(P_d-P_g) ,
  \end{aligned}\label{eq:ISOprimaldual}
\end{equation}
\end{subequations}
with design parameters $\sigma,\tau_\l\in\mathbb R_{>0}$ and diagonal
positive definite matrix $\tau_g\in\mathbb R^{n\times n}$. Bearing in
mind the ISO's second objective of driving the frequency deviation to
zero, we add the feedback signal $-\sigma^2\w$ to adjust the
generation based on the frequency deviation in the grid.

% motivation ISO dynamics
The dynamics \eqref{eq:ISOprimaldual} can be interpreted as
follows. If generator $i$ bids higher than the Lagrange multiplier
$\l$ (which can be interpreted as a price) associated with the power
balance constraint \eqref{eq:pow-bal-con}, then the power generation
setpoint at node $i$ is decreased, and vice versa. The terms $\rho
\1\1^T(P_g-P_d)$ and $-\sigma^2\w$ in~\eqref{eq:ISOprimaldual} help to
compensate for the supply-demand mismatch in the network.

In the following, we analyze the equilibria and the stability of the
interconnection of the physical power network
dynamics~\eqref{eq:swingeqU} with the bidding
process~\eqref{eq:bidding-process}. We assume that the bids and power
generations are initialized within the feasible domain, i.e.,
$b(0)\geq0, P_g(0)\geq0$.
%
%\margints{The projection of $b$ is never active if $b(0)\geq0$.}
%

\subsection{Equilibrium analysis of the interconnected
  system}\label{sec:equilibrium-analysis}

The closed-loop system composed of the ISO-generator bidding scheme \eqref{eq:bidding-process} and the power network dynamics \eqref{eq:swingeqU} is described by
\begin{subequations}\label{eq:cl-sys}
  \begin{align}
    \dot \vp&=D_t^T\w\label{eq:vp-dyn}
    \\
    M\dot \w&=-D_t\nabla U(\vp)-A\w+P_g-P_d\label{eq:w-dyn}
    \\
    \tau_b \dot b&=\pproj{P_g-\nabla C^*(b)}{b}\label{eq:b-dyn}
    \\
    \tau_g\dot P_g&=\pproj{\1\l-b+\rho \1\1^T (P_d-P_g) -\sigma^2\w}{P_g}\label{eq:Pg-dyn}
    \\
    \tau_\l\dot \l&=\1^T(P_d-P_g)\label{eq:l-dyn}
  \end{align}
\end{subequations}
where $C^*(b):=\sum_{i\in\until{n}}C_i^*(b_i)$, $\tau_{b}=\diag(\tau_{b1},\ldots,\tau_{bn})\in\mathbb R^{n\times n}$.
% equilibria
We investigate the equilibria of \eqref{eq:cl-sys}. In particular, we
are interested in equilibria that correspond simultaneously to an
efficient Nash equilibrium, economic dispatch and frequency
regulation, as specified next.
\begin{definition}[Efficient equilibrium]
  An equilibrium $\bar x=\col(\bar \vp,\bar \w,\bar b,\bar P_g,\bar
  \l)$ of \eqref{eq:cl-sys} is \emph{efficient} if $\bar \w=0$, $\bar
  b$ is an efficient Nash equilibrium, and $\bar P_g$ is a primal
  optimizer of \eqref{eq:ISO-OPF}.
\end{definition}
The next result shows that all equilibria of \eqref{eq:cl-sys} are
efficient.

\begin{proposition}\longthmtitle{Equilibria are
    efficient}\label{prop:eq-are-optimal-Nash}
  Any equilibrium $\bar x=\col(\bar \vp,\bar \w,\bar b,\bar P_g,\bar
  \l)$ of \eqref{eq:cl-sys} is efficient.
\end{proposition}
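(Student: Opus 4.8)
My plan is to read the equilibrium conditions of \eqref{eq:cl-sys} block by block and match the resulting algebraic relations against the KKT systems of \eqref{eq:ISO-OPF} and \eqref{eq:ISOprob}. I begin with frequency regulation. Setting the right-hand side of \eqref{eq:l-dyn} to zero gives the power balance $\1^T\bar P_g=\1^TP_d$, i.e.\ \eqref{eq:ISO-pow-bal}. From \eqref{eq:vp-dyn} I get $D_t^T\bar\omega=0$; since $D_t$ is the incidence matrix of a spanning tree, $\ker D_t^T=\spann\{\1\}$, so $\bar\omega=c\1$ for some scalar $c$. Premultiplying the equilibrium form of \eqref{eq:w-dyn} by $\1^T$ and using $\1^TD_t=0$ together with the power balance annihilates every term except $-\1^TA\bar\omega$, so $\1^TA\bar\omega=c\,\1^TA\1=0$; positivity of the aggregate damping $\1^TA\1>0$ then forces $c=0$, hence $\bar\omega=0$.

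Next I extract the market conditions. With $\bar\omega=0$, the frequency feedback in \eqref{eq:Pg-dyn} vanishes and, by power balance, so does the $\rho$-term, leaving $\pproj{\1\bar\lambda-\bar b}{\bar P_g}=0$. Reading \eqref{eq:projection} componentwise yields $\bar b_i=\bar\lambda$ when $\bar P_{gi}>0$ and $\bar b_i\ge\bar\lambda$ when $\bar P_{gi}=0$; equivalently, with $\bar\nu:=\bar b-\1\bar\lambda$ one has $\bar b=\1\bar\lambda+\bar\nu$ and $0\le\bar P_g\perp\bar\nu\ge0$, which are exactly the KKT conditions \eqref{eq:KKTcon-lin-opt2}, so $\bar P_g$ solves \eqref{eq:ISOprob} for the bid $\bar b$. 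I then argue $\bar b>0$: since $\1^TP_d>0$ there is some $j$ with $\bar P_{gj}>0$, whence $\bar b_j=\bar\lambda$; were $\bar b_j=0$, then $\nabla C_j^*(0)=0$ (using $\nabla C_j(0)\ge0$ and strong convexity) and the equilibrium of \eqref{eq:b-dyn} would give $\bar P_{gj}\le0$, a contradiction. Hence $\bar\lambda>0$, and $\bar b_i\ge\bar\lambda>0$ at the zero-production nodes as well, recovering Remark~\ref{rem:pos-eff-NE}. Because $\bar b>0$, \eqref{eq:b-dyn} forces $\bar P_{gi}=\nabla C_i^*(\bar b_i)=\argmax_{P_{gi}\ge0}\{\bar b_iP_{gi}-C_i(P_{gi})\}$ for all $i$, which is the efficient-bid stationarity \eqref{eq:eff-bid}.

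The crux is upgrading optimality for the linear program \eqref{eq:ISOprob} to optimality for the economic dispatch \eqref{eq:ISO-OPF}. I do this by positing the dual variable $\bar\mu:=\nabla C(\bar P_g)-\1\bar\lambda$ and checking \eqref{eq:KKTcon-sc}. The argmax relation gives $\nabla C_i(\bar P_{gi})=\bar b_i=\bar\lambda$ when $\bar P_{gi}>0$ (so $\bar\mu_i=0$) and $\nabla C_i(0)\ge\bar b_i\ge\bar\lambda$ when $\bar P_{gi}=0$ (so $\bar\mu_i\ge0$); in both cases $\bar P_{gi}\bar\mu_i=0$, and $\1^T\bar P_g=\1^TP_d$ already holds. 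Thus $(\bar P_g,\bar\lambda,\bar\mu)$ satisfies \eqref{eq:KKTcon-sc}, and by the stated uniqueness of the primal-dual optimizer of \eqref{eq:ISO-OPF} I conclude $\bar P_g=P_g^*$ and $\bar\lambda=\lambda^*$; in particular $\bar P_g$ is the ED optimizer and $\bar b$ is an efficient bid. The same two cases give $\1\lambda^*\le\bar b\le\nabla C(P_g^*)$, so Proposition~\ref{prop:exis-NE}, invoked under the standing assumption that at least two generators produce positively, certifies that $\bar b$ is an efficient Nash equilibrium. Combining $\bar\omega=0$, $\bar b$ an efficient Nash equilibrium, and $\bar P_g=P_g^*$ yields efficiency of $\bar x$.

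I expect the ED-versus-LP upgrade---the construction of $\bar\mu$ and the appeal to uniqueness of the ED optimizer---to be the main obstacle, since the remaining steps are a direct transcription of the projection conditions \eqref{eq:projection}. The one other delicate point is the strict positivity $\bar b>0$, which is what lets me activate \eqref{eq:b-dyn} on every coordinate and is precisely where the structural hypotheses $\nabla C_i(0)\ge0$ and strong convexity of $C_i$ enter.
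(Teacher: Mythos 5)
Your proposal is correct and takes essentially the same route as the paper's proof: both establish $\bar\w=0$ by premultiplying the equilibrium frequency equation by $\1^T$, deactivate the bid projection using $\nabla C_i^*(0)=0$ (the paper phrases this as $\bar\mu_b=0$ via the complementarity multipliers, you phrase it as $\bar b\geq\1\bar\l>0$), construct the same dual candidate $\bar\mu=\nabla C(\bar P_g)-\1\bar\l$ to verify the KKT conditions \eqref{eq:KKTcon-sc} of the ED problem, and conclude via Proposition~\ref{prop:exis-NE} from the sandwich $\1\bar\l\leq\bar b\leq\nabla C(\bar P_g)$. Your additional appeal to uniqueness of the primal-dual ED optimizer to identify $\bar P_g=P_g^*$ is harmless but not needed, since showing $(\bar P_g,\bar\l,\bar\mu)$ satisfies the KKT conditions already certifies that $\bar P_g$ is a primal optimizer.
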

\begin{proof}
  Let $\bar x$ be an equilibrium of \eqref{eq:cl-sys}, then there
  exist $\bar \mu_b,\bar \mu_g\in\real^n$ such that
  \begin{subequations}\label{eq:cl-sys-eq}
    \begin{align}
      0&=D_t^T\bar \w\label{eq:vp-dyn-eq}
      \\
      0&=-D_t\nabla U(\bar \vp)-A\bar \w+\bar
      P_g-P_d\label{eq:w-dyn-eq}
      \\
      0&=\bar P_g-\nabla C^*(\bar b)+\bar\mu_b\label{eq:b-dyn-eq}
      \\
      0&=\1\bar\l-\bar b+\bar\mu_g\label{eq:Pg-dyn-eq}
      \\
      0&=\1^T(P_d-\bar P_g)\label{eq:l-dyn-eq}
      \\
      0&\leq \bar b\perp \bar\mu_b\geq0\label{eq:b-comp-slack-eq}\\
      0&\leq\bar P_g\perp \bar \mu_g\geq0\label{eq:Pg-comp-slack-eq}
    \end{align}
  \end{subequations}
  % \bar w=0
  We first show that $\bar \w=0$. From \eqref{eq:vp-dyn-eq} it follows
  that $\bar \w=\1\w_s$ for some $\w_s\in\real$. Then by
  pre-multiplying \eqref{eq:w-dyn-eq} by $\1^T$ and using
  \eqref{eq:l-dyn-eq} we obtain $\1^TA\1\w_s=0$, which implies that
  $\bar \w=\1\w_s=0$. We prove next that $\bar{P}_g$ is a primal
  optimizer of \eqref{eq:ISO-OPF}.
% optimality of P_g:
% Observe that the power balance $\1^T\bar P_g=\1^TP_d$ is obtained
% from \eqref{eq:l-dyn-eq}.
  We claim that $\bar \mu_b=0$ since, by contraction, if $\bar
  \mu_{bi}>0$ for some $i\in[n]$, then $\bar b_i=0$ and therefore
  $0=\bar P_{gi}-\nabla C_i^*(\bar b_i)+\bar \mu_{bi}=\bar P_{gi}+\bar
  \mu_{bi}>0$, see also Remark~\ref{rem:pos-eff-NE}. Therefore,
  \eqref{eq:b-dyn-eq} implies that $\bar P_g=\nabla C^*(\bar
  b)=\argmax_{P_{g}\geq0}\{P_g^T\bar b-C(P_g)\}$ and thus satisfies
  the optimality conditions
  \begin{align}\label{eq:KKTcon-eff-bid-C*}
    \begin{aligned}
      \nabla C(\bar P_g)&=\bar b+\bar \eta, \quad 0\leq \bar P_g\perp\bar\eta\geq 0,
    \end{aligned}
  \end{align}
  for some $\bar \eta$. Let us define $\bar
  \mu=\bar b+\bar \eta-\1\bar \l\geq 0$ where the inequality holds by
  \eqref{eq:Pg-dyn-eq}.  By \eqref{eq:Pg-comp-slack-eq} and
  \eqref{eq:KKTcon-eff-bid-C*} we have $\bar P_g^T\bar \mu=\bar
  P_g^T(\bar b-\1\bar \l)=P_g^T\bar \mu_g=0$.
  % Next, define $\bar \eta\geq 0$ as $\bar \eta=\bar b-\1
  % \l^*$ %which is satisfies $\bar \eta\geq 0$ by assumption.
  Hence, $(\bar P_g,\bar \l,\bar \mu)$ satisfies % the KKT conditions
  % of
  % \eqref{eq:ISO-OPF}
  % given by
  \begin{align}\label{eq:KKTcon-sc2}
    \begin{aligned}
      \n C(\bar P_g)&=\1\bar \l+\bar \mu, \qquad \1^T\bar P_g=\1^TP_d,
      \\
      0&\leq \bar P_g\perp \bar \mu\geq 0,
    \end{aligned}
  \end{align}
  implying that $(\bar P_g,\bar \l,\bar \mu)$ is a primal-dual
  optimizer of \eqref{eq:ISO-OPF}.  Furthermore,
  \eqref{eq:KKTcon-eff-bid-C*} implies $\bar b\leq\nabla C(\bar P_g)$
  and thus, by Proposition~\ref{prop:exis-NE}, $\bar b$ is an
  efficient Nash equilibrium. Hence, $\bar x$ is an efficient
  equilibrium of~\eqref{eq:cl-sys}.
\end{proof}

\subsection{Convergence analysis}
% In this section we establish local asymptotic stability of the
% interconnected system
% \eqref{eq:cl-sys}. %As it is formulated as a complementarity system, we use an appropriate invariance principle from \cite{brogliato2005krakovskii}, which we reformulate below.
% We refocus our attention to the closed-loop system \eqref{eq:cl-sys}
% or \eqref{eq:cl-sys-compact}.  We define the set of stable
% equilibria $\mathcal X\subset K$ as
% \begin{align*}
%   \mathcal X:=\{&\col(\bar \vp,\bar\w,\bar b,\bar P_g,\bar \l) : \eqref{eq:cl-sys-eq}\text{ holds for some }\bar\mu_b,\bar\mu_g\\&\text{and }D^TD_t^{\dagger T}\bar\vp\in(-\pi/2,\pi/2)^m\}
% \end{align*}
% the collection of $\bar x=\col(\bar \vp,\bar\w,\bar b,\bar P_g,\bar
% \l)$ that satisfy \eqref{eq:cl-sys-eq} and $D^TD_t^{\dagger
% T}\bar\vp\in(-\pi/2,\pi/2)^m$ for some $\bar \Lambda$.
In this section we establish the local asymptotic convergence
of~\eqref{eq:cl-sys} to an efficient equilibrium.

\begin{theorem}
  Consider the subset of (efficient) equilibria,
  \begin{align*}
    \mathcal X:=\{& \bar{x} = \col(\bar \vp,\bar\w,\bar b,\bar
    P_g,\bar \l) : \bar{x} \text{ is an equilibrium
      of~\eqref{eq:cl-sys}}
    % \eqref{eq:cl-sys-eq}\text{ holds for some
    % }\bar\mu_b,\bar\mu_g
    \\&\text{and }D^TD_t^{\dagger
      T}\bar\vp\in(-\pi/2,\pi/2)^m\}.
  \end{align*}
  Then $\mathcal X$ is locally asymptotically stable under
  \eqref{eq:cl-sys}. Moreover, the convergence is to a point.
\end{theorem}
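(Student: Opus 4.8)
The plan is to combine the physical energy of the swing dynamics with quadratic storage functions for the market variables into a single Lyapunov function, and then apply an invariance principle for the projected closed-loop system. Fix an equilibrium $\bar x=\col(\bar\vp,\bar\w,\bar b,\bar P_g,\bar\l)\in\XX$. By Proposition~\ref{prop:eq-are-optimal-Nash} we have $\bar\w=0$, $\bar P_g=\n C^*(\bar b)$ (so $\bar\mu_b=0$), $\1^T\bar P_g=\1^TP_d$, and there is $\bar\mu_g\geq0$ with $\1\bar\l-\bar b+\bar\mu_g=0$ and $0\leq\bar P_g\perp\bar\mu_g\geq0$. I would take as candidate
\begin{equation*}
  V(x)=\tfrac12\w^TM\w+B_U(\vp)+\frac{1}{\sigma^2}\Big(\tfrac12\norm{b-\bar b}_{\tau_b}^2+\tfrac12\norm{P_g-\bar P_g}_{\tau_g}^2+\tfrac12\tau_\l(\l-\bar\l)^2\Big),
\end{equation*}
where $B_U(\vp)=U(\vp)-U(\bar\vp)-\n U(\bar\vp)^T(\vp-\bar\vp)$ is the Bregman divergence of the potential $U$. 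The security condition $D^TD_t^{\dagger T}\bar\vp\in(-\pi/2,\pi/2)^m$ renders $\n^2U$ positive definite near $\bar\vp$, so $B_U$ is locally positive definite; together with $M,\tau_b,\tau_g,\tau_\l\succ 0$ this makes $V$ locally positive definite about $\bar x$ with $V(\bar x)=0$. The crucial weighting is the factor $1/\sigma^2$ on the market block, chosen so that the frequency feedback balances the power injection in the interconnection.

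Next I would differentiate $V$ along \eqref{eq:cl-sys}. In the physical block the skew coupling through $D_t\n U(\vp)$ cancels, and substituting the equilibrium identity $D_t\n U(\bar\vp)=\bar P_g-P_d$ (valid since $\bar\w=0$) yields $\dot V_{\mathrm{phys}}=-\w^TA\w+\w^T(P_g-\bar P_g)$. For the market block the key tool is the projection inequality: for any $\bar y\geq0$ one has $(y-\bar y)^T\pproj{f}{y}\leq(y-\bar y)^Tf$. Applying it to \eqref{eq:b-dyn} and \eqref{eq:Pg-dyn} and inserting the equilibrium relations above, the bilinear terms $(b-\bar b)^T(P_g-\bar P_g)$ and $(\l-\bar\l)\1^T(P_g-\bar P_g)$ telescope, convexity of the conjugate $C^*$ supplies $-(b-\bar b)^T(\n C^*(b)-\n C^*(\bar b))\leq0$, the complementarity of $(\bar P_g,\bar\mu_g)$ contributes $-P_g^T\bar\mu_g\leq0$, and the feedback $-\sigma^2\w$ cancels $\w^T(P_g-\bar P_g)$ exactly because of the $1/\sigma^2$ scaling. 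The result is
\begin{equation*}
  \dot V\leq-\w^TA\w-\tfrac{1}{\sigma^2}(b-\bar b)^T(\n C^*(b)-\n C^*(\bar b))-\tfrac{\rho}{\sigma^2}\big(\1^T(P_g-\bar P_g)\big)^2-\tfrac{1}{\sigma^2}P_g^T\bar\mu_g\leq0.
\end{equation*}

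With $\dot V\leq0$ and $V$ locally positive definite, a small sublevel set of $V$ is compact and forward invariant, so trajectories starting near $\bar x$ are bounded, and I would invoke an invariance principle for projected (Carath\'eodory) dynamics in the spirit of \cite{cherukuri2017saddle,goebel2017stability}. The trajectory then approaches the largest invariant set $\mathcal M$ on which $\dot V=0$. On $\mathcal M$ the damping term forces $A\w=0$, hence $\w=0$ (immediate if $A\succ 0$; otherwise one combines $A\w=0$ with invariance and connectivity as in the proof of Proposition~\ref{prop:eq-are-optimal-Nash}). Then $\dot\vp=D_t^T\w=0$ and \eqref{eq:w-dyn} make $\vp$ and $P_g$ constant, $\1^T(P_g-\bar P_g)=0$ makes $\l$ constant, and finally the bid is pinned down: active components satisfy $b_i=\l$, while for inactive components the vanishing of $(b-\bar b)^T(\n C^*(b)-\n C^*(\bar b))$ together with $\bar P_g=\n C^*(\bar b)$ forces $b_i$ into the flat region $b_i\leq\n C_i(0)$ where $\n C_i^*(b_i)=0$, so $\dot b_i=0$. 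Hence $\mathcal M$ consists entirely of (efficient) equilibria in $\XX$.

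Finally, boundedness yields a nonempty, compact, connected $\w$-limit set $\Omega\subseteq\mathcal M$. Since the same construction furnishes a Lyapunov function centered at \emph{every} equilibrium of $\XX$, each point of $\Omega$ is Lyapunov stable, and a connected set of Lyapunov-stable equilibria must be a singleton; the trajectory therefore converges to a single efficient equilibrium. This simultaneously gives local asymptotic stability of $\XX$ and convergence to a point. I expect the invariant-set step to be the main obstacle: making the invariance principle rigorous for the nonsmooth projected vector field and, within it, excluding drift of $b$ along $\mathcal M$ despite $C^*$ being only \emph{piecewise} strictly convex. The flat-region argument above, anchored on the equilibrium identity $\bar P_g=\n C^*(\bar b)$, is what resolves this difficulty; the $\w$-cancellation and the partial-damping case are the other points that require care.
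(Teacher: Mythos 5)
Your proposal matches the paper's proof essentially step for step: the same Bregman-plus-weighted-quadratic Lyapunov function $V$, the same dissipation bound obtained from convexity of $C^*$ and the equilibrium complementarity relations, an invariance principle for the projected/complementarity dynamics, the same analysis of the largest invariant set (treating generators with $\bar P_{gi}>0$ via local strict convexity of $C_i^*$ and pinning $b_i=\l=\bar\l$, and inactive generators via the flat region $\n C_i^*(b_i)=0$), and the same argument that Lyapunov stability of every equilibrium in $\mathcal X$ forces convergence to a single point. One detail is actually in your favor: your weight $1/\sigma^2$ on the market block is the correct choice to cancel the cross term $\w^T(P_g-\bar P_g)$ against the feedback $-\sigma^2\w$, whereas the paper's closing remark (scaling $\tau_b,\tau_g,\tau_\l$ by $\sigma$) achieves that cancellation only when $\sigma=1$.
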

\begin{proof}
  Our proof strategy to show local convergence to~$\mathcal X$ is
  based on applying Theorem~\ref{thm:inv-prin}, which is a special
  case of the invariance principle stated in
  \cite{brogliato2005krakovskii} adapted for complementarity
  systems. To this end, we rewrite the projected dynamical
  system~\eqref{eq:cl-sys} as the equivalent complementarity
  system~\eqref{eq:cl-comp-sys}, see also \cite[Theorem
  1]{brogliato2006equivalence} for more details,
  \begin{subequations}\label{eq:cl-comp-sys}
    \begin{align}
      \dot \vp&=D_t^T\w\label{eq:vp-comp-dyn}
      \\
      M\dot \w&=-D_t\nabla U(\vp)-A\w+P_g-P_d\label{eq:w-comp-dyn}
      \\
      \tau_b \dot b&=P_g-\nabla C^*(b)+\mu_b\label{eq:b-comp-dyn}
      \\
      \tau_g\dot P_g&=\1\l-b+\rho \1\1^T (P_d-P_g) -\sigma^2\w+\mu_g\label{eq:Pg-comp-dyn}
      \\
      \tau_\l\dot \l&=\1^T(P_d-P_g)\label{eq:l-comp-dyn}\\
      0&\leq  b\perp \mu_b\geq0\label{eq:b-comp-slack}\\
      0&\leq P_g\perp  \mu_g\geq0\label{eq:Pg-comp-slack}
    \end{align}
  \end{subequations}
  where $\mu_b,\mu_g\in\mathbb R^n$. We can equivalently write \eqref{eq:cl-comp-sys} in the compact form
  \begin{subequations}\label{eq:cl-sys-compact}
    \begin{align}
      \dot x&=F(x)+C^T\Lambda\\
      0&\leq Cx+d\perp \Lambda\geq0\label{eq:cl-sys-compact-slackness}
    \end{align}
  \end{subequations}
  with $x=\col(\vp,\w,b,P_g,\l), \Lambda=\col(\mu_b,\mu_g)$,  and 
  \begin{subequations}
    \begin{align}
      F(x)&=
      \begin{bmatrix}
        D_t^T\w \\
        M^{-1}(-D_t\nabla U(\vp)-A\w+P_g-P_d)\\
        \tau_b^{-1}( P_g-\nabla C^*(b))\\
        \tau_g^{-1}(\1\l-b+\rho\1\1^T (P_d-P_g) -\sigma^2\w)\\
        \tau_\l^{-1}\1^T(P_d-P_g)
      \end{bmatrix}\label{eq:F}\\
      C&=
      \begin{bmatrix}
        0           & 0          &     \tau_b^{-1} & 0         &0&0  \\
        0           & 0        &    0           & \tau_g^{-1} &0&0
      \end{bmatrix}, \quad d=0.     \label{eq:C-Lambda}
    \end{align}
  \end{subequations}
  Note that $F$ is Lipschitz continuous\footnote{Here we observe that, since $C$ is continuously differentiable and $\mu$-strongly convex on $\realnonnegative$, $C^*$ is $\frac{1}{\mu}$-Lipschitz continuous on $\realnonnegative$.}.  For the equivalence of the projected dynamical system \eqref{eq:cl-sys} and the complementarity system \eqref{eq:cl-comp-sys} to hold, we consider absolutely continuous solutions $t\mapsto x(t)$ that satisfy \eqref{eq:cl-comp-sys} almost everywhere (in time) in the sense of Lebesgue measure. In addition, we consider (unique) solutions of \eqref{eq:cl-sys-compact} that are \emph{slow}. That is, at each time $t$, $\Lambda$ satisfies \eqref{eq:cl-sys-compact-slackness} and is such that $\dot x(t)$ is of minimal norm, see also \cite{brogliato2006equivalence}.

  % --------------------------------------------------
  % FOR MY OWN UNDERSTANDING
  % First, observe that the system \eqref{eq:cl-sys} can be written
  % compactly in the form \eqref{eq:cl-sys} which identical as
  % \cite[eq. (3)]{brogliato2005krakovskii} or equivalently
  % \cite[eq. (1)]{brogliato2005krakovskii}, see also
  % \cite{brogliato2006equivalence} for more details on this
  % equivalence between complementarity systems and evolutionary
  % variational inequalities\footnote{In particular, notice that under
  % Slater's condition the associated projected dynamical system, the
  % evolutionary variational inequality and the complementarity system
  % are equivalent by Theorem 1 of \cite{brogliato2006equivalence}.}.
  % --------------------------------------------------

  Let $\bar x\in \mathcal X$ be arbitrary and fixed for the remainder
  of the proof. For aesthetic reasons we first consider the case where $\sigma=1$ in \eqref{eq:Pg-dyn} or \eqref{eq:Pg-comp-dyn} and later we explain how to generalize the convergence result. Consider the function $V$ defined by
  \begin{align}\label{eq:V}
    V(x)=U(\vp)-(\vp-\bar \vp)^T\n U(\bar \vp)-U(\bar
    \vp)+\frac12||x-\bar x||^2_{\tau}
  \end{align}
  with $\tau=\blockdiag(0,M,\tau_b,\tau_g,\tau_\l)$.  Note that
  $V(\bar x)=0,\n V(\bar x)=0$ and, since $D^TD_t^{\dagger
    T}\bar\vp\in(-\pi/2,\pi/2)^{m}$, $\n^2V(\bar x)>0$. Consequently,
  there exists a compact level set $\Psi$ of $V$ around $\bar x$. We
  show now that the two conditions of Theorem~\ref{thm:inv-prin} are
  satisfied.
  
  \textbf{Condition (I):} For $C$ given in \eqref{eq:C-Lambda} and
  $d=0$ the polyhedron \eqref{eq:K} takes the form
  \begin{align*}
    K=\{x=\col(\vp,\w,b,P_g,\l) : b\geq0, P_g\geq0\}.
  \end{align*}
  Consequently, for all $x\in\p K\cap \Psi$ we have
  % We claim that $x-\nabla V(x)\in K$ for all $x\in\p K\cap\Psi$
  % where $K$ is defined in \eqref{eq:K}. Observe that $b-\frac{\p
  % V}{\p b}(x)=b-\tau_b(b-\bar b)=\tau_b\bar b\geq0$ for all $x\in\p
  % K\cap \Psi$. Likewise, $P_g-\frac{\p V}{\p
  % P_g}(x)=P_g-\tau_g(P_g-\bar P_g)=\tau_g\bar P_g\geq0$ for all
  % $x\in\p K\cap \Psi$. Finally, since $\vp,\w,\l$ are unconstrained,
  % it follows that $x-\nabla V(x)\in K$ for all $x\in\p K\cap\Psi$.
  \begin{align*}
    x-\nabla V(x)=
    \begin{bmatrix}
      \vp-\nabla U(\vp)+\n U(\bar \vp)\\
      \w-M\w\\
      b-\tau_b(b-\bar b)\\
      P_g-\tau_g(P_g-\bar P_g)\\
      \l-\tau_\l(\l-\bar \l)
    \end{bmatrix}=
    \begin{bmatrix}
      *\\
      *\\
      \tau_b\bar b\\
      \tau_g\bar P_g\\
      *
    \end{bmatrix}\in K.
  \end{align*}

  \textbf{Condition (II):} Since $\bar x\in\mathcal X$ there exists
  $\bar \Lambda$ such that $F(\bar x)+C^T\bar \Lambda=0.$ As a result,
  for each $x\in K$ we have
  \begin{align}\label{eq:cond2}
    \begin{aligned}
      & \langle \nabla V(x),F(x) \rangle=\langle\nabla V(x),
      F(x)-F(\bar x)-C^T\bar \Lambda \rangle
      \\
      &=(\n U(\vp)-\n U(\bar \vp))D^T\w
      \\
      &+\w^T(-D(\n U(\vp)-\n U(\bar \vp))-A\w+P_g-\bar P_g)
      \\
      &+(b-\bar b)^T(P_g-\n C^*(b)-\bar P_g+\n C^*(\bar b)-\bar \mu_b)
      \\
      &+(P_g-\bar P_g)^T(\1(\l-\bar\l)-b+\bar b+\rho \1\1^T (\bar
      P_g-P_g)
      \\
      &-\sigma^2\w-\bar\mu_g)+(\l-\bar \l)\1^T(\bar P_g-P_g)
      \\
      &=-\w^TA\w-(b-\bar b)^T(\n C^*(b)-\n C^*(\bar b))
      \\
      &-\rho\|\1^T(\bar P_g-P_g)\|^2-(b-\bar b)^T\bar \mu_b
      \\
      &-(P_g-\bar P_g)^T\bar \mu_g\leq0
    \end{aligned}
  \end{align}
  where the inequality holds because $C^*$ is convex, $\bar b^T\bar
  \mu_b=0, \bar P_g^T\bar \mu_g=0$ and $\bar \mu_b,\bar
  \mu_g,b,P_g\geq0$. Hence, the second condition of Theorem~\ref{thm:inv-prin} is satisfied.

  \textbf{Invariance of $\Psi$:} We note that \eqref{eq:cond2} does
  not necessarily imply that $\Psi$ is forward invariant. We show this
  next. Observe that for each $x,\Lambda$ satisfying $0\leq Cx\perp
  \Lambda\geq0$ we have
  \begin{align}\label{eq:inv-Psi}
    \begin{aligned}
      & \langle \n V(x),F(x)+C^T\Lambda\rangle=\langle \n
      V(x),F(x)\rangle
      \\
      &+\langle \n V(x), C^T\Lambda\rangle\leq \langle \n V(x),
      C^T\Lambda\rangle
      \\
      &=(b-\bar b)^T \mu_b+(P_g-\bar P_g)^T \mu_g
      \\
      &=-\bar b^T \mu_b-\bar P_g^T \mu_g\leq 0.
    \end{aligned}
  \end{align}
  Hence, the $V$ is non-increasing along trajectories initialized in
  $K\cap\Psi$. Since $\Psi$ is a level set of $V$, this implies that
  $\Psi$ is forward invariant.
  
  \textbf{Largest invariant set:} Define
  \begin{align*}
    E=\{x\in K\cap \Psi \ : \ \langle F(x),\nabla V(x) \rangle=0\}
  \end{align*}
  and denote the largest invariant subset of $E$ by $\mathcal M$. By
  \eqref{eq:cond2} we note that each $x\in \mathcal M$ satisfies
  $\w=0, \1^T(P_d-P_g)=0$ and, $b_i=\bar b_i>0$ (otherwise, if $\bar
  b_i=0$, then $0=\bar P_{gi}-\n C_i^*(\bar b_i)+\bar\mu_{bi}=\bar
  P_{gi}+\bar\mu_{bi}>0$, which results in a contradiction) for each
  $i\in[n]$ with $\bar P_{gi}>0$ as $C_i^*$ is strictly convex around
  such $\bar b_i$. For these $i$, $P_{gi}=\bar P_{gi}>0$ by
  \eqref{eq:b-dyn} and $b_i=\l=\bar \l$ by \eqref{eq:Pg-dyn}.  For
  each $x\in\mathcal M$ and $i\in[n]$ with $\bar P_{gi}=0$, we have
  $\n C_i^*(b_i)=\n C_i^*(\bar b_i)=0$ by the convexity of $C_i$ and
  thus $P_{gi}=\bar P_{gi}=\mu_{b_i}=0$ and thus $b_i=\l+\mu_{gi}$.
  Hence, $\mathcal M\subset \mathcal X$ and therefore each trajectory
  initialized in $\Psi$ converges to $\mathcal X$. Moreover,
  from~\eqref{eq:inv-Psi}, we deduce that $\bar{x}$ is stable. Since
  this equilibrium has been chosen arbitrarily, we conclude that every
  point in $\mathcal X$ is Lyapunov stable, implying that convergence
  of the trajectories is to a point.

  The proof for the case $\sigma>0,\sigma\neq 1$ proceeds in the same
  way as before except that we appropriately scale the Lyapunov
  function. Specifically, we define the Lyapunov function $V$ as in
  \eqref{eq:V} but with
  $\tau=\blockdiag(0,M,\sigma\tau_b,\sigma\tau_g,\sigma\tau_\l)>0$.
\end{proof}

\section{Simulations}\label{sec:simulations}

\tikzstyle{edge} = [draw,line width=1pt,-]
\tikzstyle{edge} = [draw,line width=1pt,-]
\tikzstyle{vertex}=[circle,% ball color=contarea,
draw=black!100,line width=1pt,fill=blue!20,minimum size=16pt,inner sep=0pt]
\tikzstyle{block}=[rectangle,% ball color=contarea,
draw=black!100,line width=1pt,fill=blue!30,minimum width=18mm,rounded corners]
\begin{figure}[]
  \centering
  \begin{tikzpicture}[scale=1.0, auto,swap]% scale=4
    \definecolor{v1}{rgb}{0 ,   0.4470,    0.7410} 
    \definecolor{v2}{rgb}{    0.8500 ,   0.3250,    0.0980}
    \definecolor{v3}{rgb}{    0.9290 ,   0.6940,    0.1250}
    \definecolor{v4}{rgb}{    0.4940 ,   0.1840,    0.5560}
    \definecolor{v5}{rgb}{    0.4660 ,   0.6740,    0.1880}
    \definecolor{v6}{rgb}{    0.3010 ,   0.7450,    0.9330}
    \definecolor{v9}{rgb}{    0.6350 ,   0.0780,    0.1840}
    \definecolor{v8}{rgb}{    0.4940 ,   0.1840,    0.5560}
    \definecolor{v7}{rgb}{    1 ,   0.3,    0.3}
    % Draw the IEEE 14-bus benchmark network
    % First we draw the vertices
    \foreach \x/\y/\i/\c in {0/0/1/1,3/0/2/2,6/0/3/3,4.5/1/4/7,1.5/1/5/7,0/2/6/6,6/2/7/7,6/1/8/8,4.5/2
      /9/7,3/2/10/7,1.5/2/11/7,0/3/12/7,1.5/3/13/7,4.5/3/14/7},
    \node[vertex,fill=v\c!60] (\i) at (\x,\y) {$\i$};

    % Connect vertices with edges and draw weights
    \foreach \source/\dest in {1/2,1/5,2/3,2/4,2/5,3/4,4/5,4/7,4/9,5/6,6/11,6/12,6/13,7/8,7/9,9/10,9/14,10/11,12/13,13/14},
    \path[edge] (\source) -- (\dest);
    % \node[block] (load) at (6.5,2) {loads} ;
    % \node[block,fill=red!30] (gen) at (6.5,1) {generators} ;
  \end{tikzpicture}%\vspace*{-2mm}
  
  \caption{Schematic of the modified IEEE 14-bus benchmark. Each edge
    in the graph represents a transmission line.  Red nodes represent
    loads.  All the other nodes represent synchronous generators, with
    different colors that match the ones used in
    Figure~\ref{fig:proj-sys}. The physical dynamics are modeled
    by~\eqref{eq:swingdeltacomp}.  }
  \label{fig:14-bus}\vspace*{-1ex}
\end{figure}
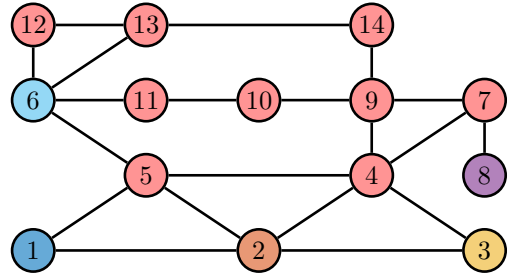

We simulate the closed-loop dynamics \eqref{eq:cl-sys} for the
modified IEEE 14-bus benchmark model illustrated in Figure
\ref{fig:14-bus}. We assume quadratic costs at each node $i\in[14]$ of
the form
\begin{align*}
  C_i(P_{gi})=\frac12q_iP_{gi}^2+c_iP_{gi}
\end{align*}
with $q_i>$ and $c_i\geq0$. In the original 14-bus model, nodes
$1,2,3,6,8$ have synchronous generators while the other nodes are
load nodes and have no power generation. We replicate this by
increasing the cost (by setting $q_i,c_i\gg0$) at the load nodes to
ensure positive power generation is not profitable at them.  In
addition, we choose $M_i\in[4,5.5]$ for generator nodes
$i\in\{1,2,3,6,8\}$ and $M_i\ll1$ for the load nodes. We set $
A_i\in[2,3], V_i\in[1,1.06]$ for all $i\in[n]$ and $\rho=300$. At
$t=\SI{0}{\second}$, the load (in \SI{}{\mega\watt}'s) is given by
\begin{align*}
  P_d= (0, 22, 80, 48, 7.6, 11, 0, 0, 30, 9.0, 3.5, 6.1, 14, 15).
\end{align*}
Initially, we set $(q_1,q_2,q_3,q_6,q_8)=(26,70,150,150,300)$ and
$(c_1,c_2,c_3,c_6,c_8)=(7.5,30,90,82.5,75)$. The
system~\eqref{eq:cl-sys} is initialized at steady state at the optimal
generation level
\begin{align*}
  (P_{g1},P_{g2})=(201, 44)
\end{align*}
and with $P_{gi}=0$ for all other nodes.  Figure~\ref{fig:proj-sys}
shows the evolution of the system in the case when $\sigma=300$ and
Figure \ref{fig:proj-sys0} in the case when $\sigma=0$. Note that in
the latter case, there is no frequency signal fed back into the
bidding process, so the dynamics~\eqref{eq:cl-sys} effectively becomes
a cascaded system (where the bidding process drives the physical
dynamics of the power network).  At $t=\SI{1}{\second}$ the load at
node $3$ is increased from \SI{80}{\mega\watt} to
\SI{94.2}{\mega\watt} and the trajectories converge to a new efficient
equilibrium with optimal power generation level
$(P_{g1},P_{g2})=(211,48)$ and $P_{gi}=0$ for all other
nodes. Furthermore, at steady state generators $1,2$ bid equal to the
Lagrange multiplier while generators $3,6,8$ bid their marginal cost
at zero production ($b_{i}=c_i$, for $i=3,6,8$) and thus, by
Proposition~\ref{prop:exis-NE}, we know that this corresponds to an
efficient Nash equilibrium.

\begin{figure*}[tbh]
  \centering
  \begin{subfigure}[t]{0.3\textwidth}
    \includegraphics[width=\textwidth]{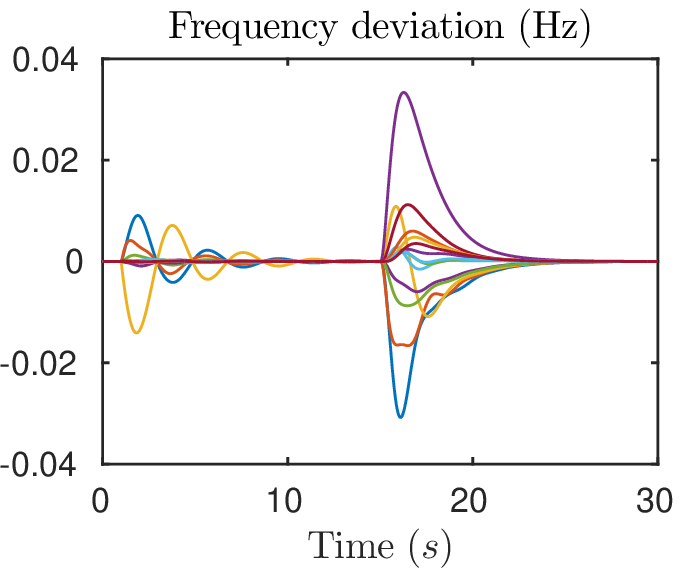}
    \caption{Evolution of the frequency deviation. After each
      change of the load or the cost functions, the frequency is
      restored to its nominal value.}
    \label{fig:freq}
  \end{subfigure}
  ~ %add desired spacing between images, e. g. ~, \quad, \qquad, \hfill etc. 
  % (or a blank line to force the subfigure onto a new line)
  \begin{subfigure}[t]{0.3\textwidth}
    \includegraphics[width=\textwidth]{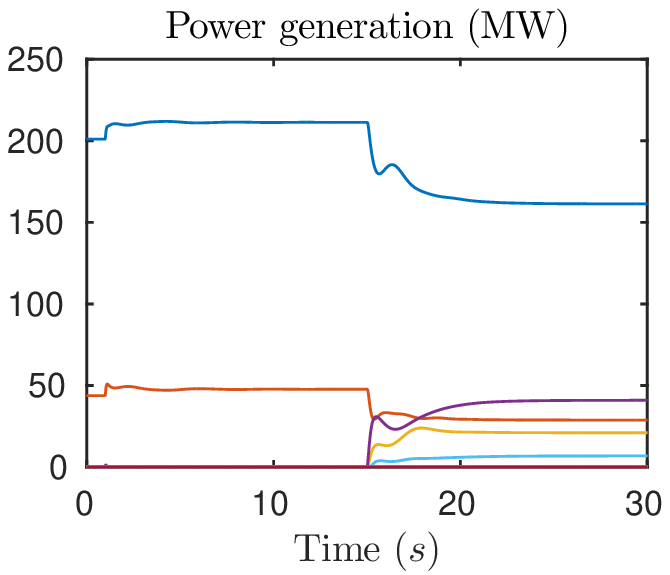}
    \caption{Evolution of the power generation at each node. After the change
      of the cost functions in nodes $3,6,8$, there is more
      competition among the generators, resulting in lower power
      productions at node $1$ and $2$.}
    \label{fig:Pg}
  \end{subfigure}
  ~ %add desired spacing between images, e. g. ~, \quad, \qquad, \hfill etc. 
  % (or a blank line to force the subfigure onto a new line)
  \begin{subfigure}[t]{0.3\textwidth}
    \includegraphics[width=\textwidth]{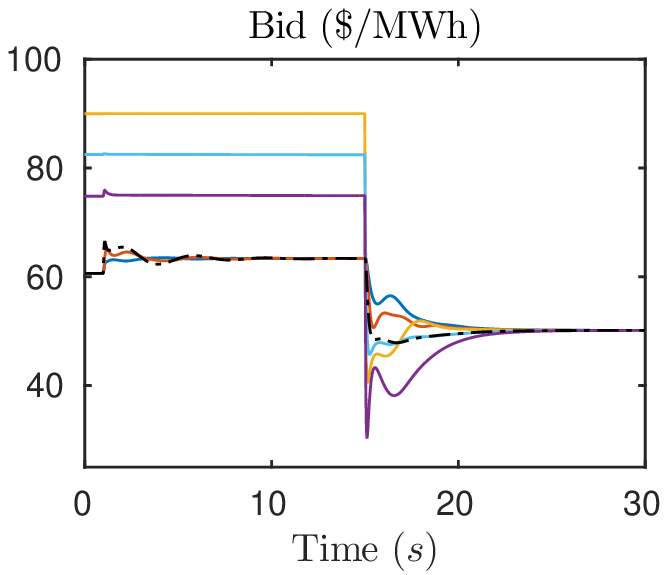}
    \caption{Evolution of the bids and the Lagrange multiplier
      (represented by the dashed black colored line). Initially,
      the marginal costs (and the bids) at zero power production
      are higher than the market equilibrium price for nodes
      $3,6,8$.}
    \label{fig:bids}
  \end{subfigure}
  \caption{Simulations of the interconnection~\eqref{eq:cl-sys}
    between the ISO-generation bidding mechanism and the power network
    dynamics with $\sigma=300$.  At $t=\SI{1}{\second}$ the load at
    node $3$ is increased from \SI{80}{\mega\watt} to
    \SI{94.2}{\mega\watt}. At $t=\SI{15}{\second}$ the marginal cost
    decreases at nodes $3,6,8$ which allows these generators to make
    profit by lowering their bids to have a positive power production
    as illustrated in plots (b) and (c). }\label{fig:proj-sys}
\end{figure*}

\begin{figure*}[tbh]
  \centering
  \begin{subfigure}[t]{0.3\textwidth}
    \includegraphics[width=\textwidth]{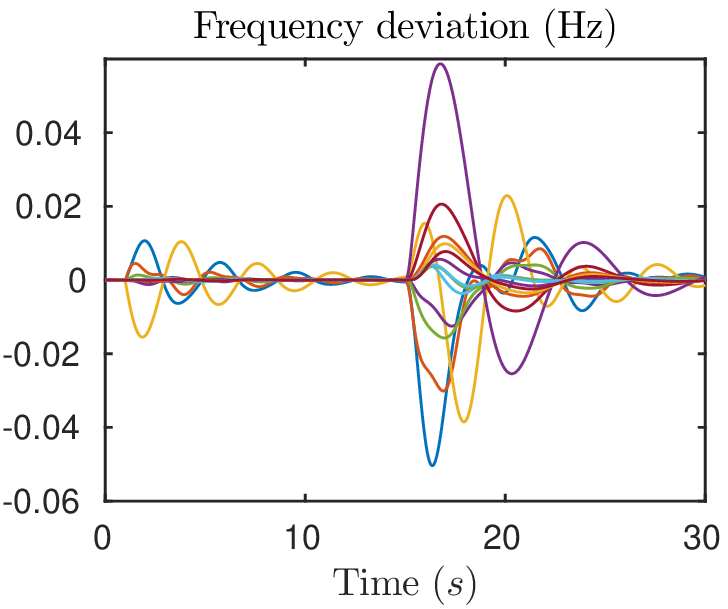}
    \caption{Evolution of the frequency deviation. Compared to
      Figure~\ref{fig:proj-sys}(a), there are more oscillations and a
      larger overshoot of the frequency deviations. }
    \label{fig:freq0}
  \end{subfigure}
  ~ %add desired spacing between images, e. g. ~, \quad, \qquad, \hfill etc. 
  % (or a blank line to force the subfigure onto a new line)
  \begin{subfigure}[t]{0.3\textwidth}
    \includegraphics[width=\textwidth]{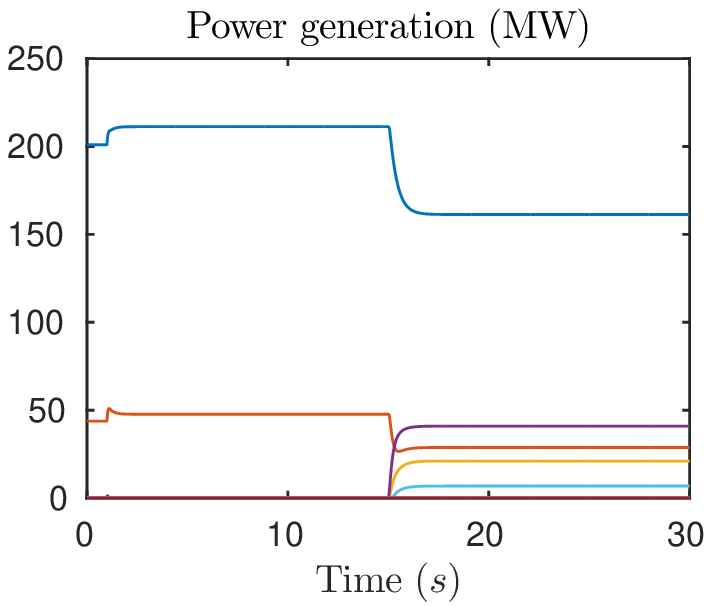}
    \caption{Evolution of the power generations at each node. As $\sigma=0$, the generations are updated only according to the bidding process without using any frequency measurements. }
    \label{fig:Pg0}
  \end{subfigure}
  ~ %add desired spacing between images, e. g. ~, \quad, \qquad, \hfill etc.
  % (or a blank line to force the subfigure onto a new line)
  \begin{subfigure}[t]{0.3\textwidth}
    \includegraphics[width=\textwidth]{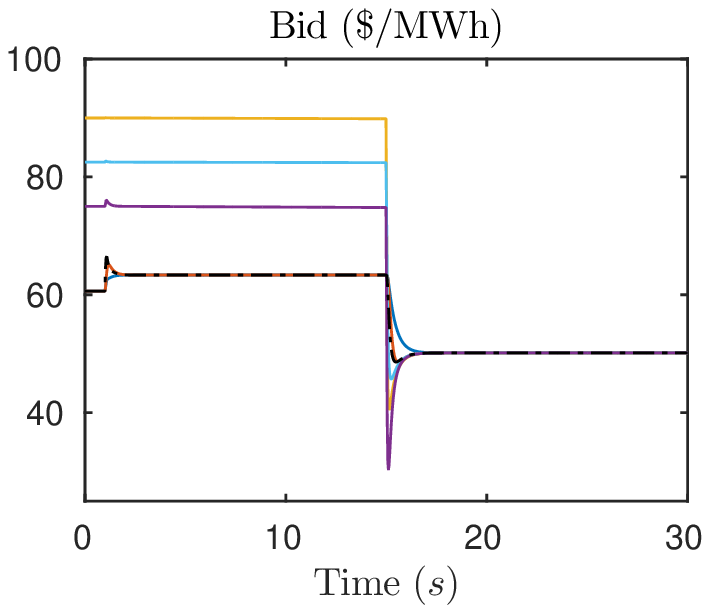}
    \caption{Evolution of the bids and the Lagrange multiplier.
      Compared to Figure~\ref{fig:proj-sys}(c), the convergence is
      faster in this scenario because the bidding process does not
      take into account its impact on the dynamics of the power
      network.}
    \label{fig:bids0}
  \end{subfigure}
  \caption{Simulations of the interconnection~\eqref{eq:cl-sys}
    between the ISO-generation bidding mechanism and the power network
    dynamics for the case of $\sigma=0$, i.e., when there is no
    frequency feedback signal in the bidding process. The scenario is
    the same as in Figure \ref{fig:proj-sys}. As illustrated, the
    closed-loop system also converges in this case to an efficient
    equilibrium.}\label{fig:proj-sys0}
\end{figure*}

At $t=\SI{15}{\second}$ the cost of producing electricity is decreased
in areas $3,6,8$ by setting $(q_3,q_6,q_8)=(60,75,68)$ and
$(c_3,c_6,c_8)=(38,45,23)$. This allows these generators to make
profit by participating in the bidding process and results in a
reduction of the total cost of the generation from
\SI{9711}{\$\per\hour} to \SI{8540}{\$\per\hour}. As illustrated in
both Figures~\ref{fig:proj-sys} and~\ref{fig:proj-sys0}, the power
generations converge to the new optimal steady state given by
\begin{align*}
  (P_{g1},P_{g2},P_{g3},P_{g6},P_{g8})=(161, 29, 21, 7, 41).
\end{align*}
In addition, we observe that after each change of either the load or
the cost function, the frequency is stabilized and the bids converge
to a new efficient Nash equilibrium. The fact that the frequency
transients are better in Figure~\ref{fig:proj-sys} than in
Figure~\ref{fig:proj-sys0} is consistent, since in the latter case
there is no frequency feedback in the bidding process.

\section{Conclusions}\label{sec:conclusions}
 
We have studied a market-based power dispatch scheme and its
interconnection with the swing dynamic of the physical network. From
the market perspective, we have considered a continuous-time bidding
scheme that describes the negotiation process between the independent
system operator and a group of competitive generators.
% We have studied the interaction between a competitive electricity
% market and the physical dynamics of the power network.  From the
% market perspective, we have considered a continuous-time bidding
% scheme that describes the negotiation process between the
% independent system operator and a group of strategic
% generators. % which respects the nonnegativity constraints
% On the physical side, we have considered the frequency dynamics
% described by the swing equations.
Using the frequency as a feedback signal in the bidding dynamics, we
have shown that the interconnected projected dynamical system provably
converges to an efficient Nash equilibrium (where generation levels
minimize the total cost) and to zero frequency deviation. In this way,
competitive generators are enabled to participate in the real-time
electricity market without compromising efficiency and stability of
the power system. Future work will investigate finite-horizon
scenarios and incorporate generator bounds and power flow constraints
in the economic dispatch formulation.

%%%%%%%%%%%%%%%%%%%% APPENDIX %%%%%%%%%%%%%%%%%%%%

% if have a single appendix:
% \appendix[Proof of the Zonklar Equations]
% or
% \appendix  % for no appendix heading
% do not use \section anymore after \appendix, only \section*
% is possibly needed

\appendix

% %
%  we use an appropriate invariance principle from \cite{brogliato2005krakovskii}, which we reformulate below

\begin{theorem}[Invariance principle for complementarity systems
  \cite{brogliato2005krakovskii}]
  \label{thm:inv-prin}
  Consider the system
\begin{subequations}\label{eq:compact-comp}
  \begin{align}
    \dot x&= F(x)+C^T\Lambda\\
    0&\leq Cx+d\perp \Lambda\geq0
  \end{align}
\end{subequations}
  with  Lipschitz continuous $F$ and let $K$ be the polyhedron 
\begin{align}\label{eq:K}
  K=\{x : Cx+d\geq 0\}.
\end{align}
Let $\Psi\subset\real^n$ be a compact set and $\map{V}{\real^n}{\real}$ be a continuous differentiable function such that 
\begin{align*}
\begin{tabular}[h]{cll}
\textup{(\bf I)}&\quad $x-\n V(x)\in K$, &for all $x\in\p K\cap \Psi$,\\
\textup{(\bf II)}&\quad $\langle \n V(x),F(x)\rangle\leq0$, &for all $x\in K\cap \Psi$.
\end{tabular}
\end{align*}
Let $E\subset \mathbb R^n$ be given by
\begin{align*}
  E:=\{x\in K\cap \Psi : \langle F(x),\n V(x)\rangle=0\}
\end{align*}
and denote the largest invariant subset of $E$ by $\mathcal M$. Then, for each $x_0\in K$ such that its orbit satisfies $\gamma(x_0)\subset \Psi$, we have 
\begin{align*}
  \lim_{t\to\infty}d(x(t;t_0,x_0),\mathcal M)=0.
\end{align*}
\end{theorem}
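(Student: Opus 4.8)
The plan is to prove a LaSalle-type invariance principle, adapting the classical argument to account for the multiplier term $C^T\Lambda$ that the complementarity condition introduces. First I would fix the solution concept: since $F$ is Lipschitz, the complementarity system \eqref{eq:compact-comp} is equivalent, through the slow-solution selection (cf.\ \cite{brogliato2006equivalence}), to a projected dynamical system on $K$. For each $x_0\in K$ this admits a unique absolutely continuous solution $t\mapsto x(t;t_0,x_0)$ that remains in $K$ for all $t\ge t_0$ and depends continuously on $x_0$. These three facts---forward invariance of $K$, uniqueness, and continuity with respect to initial data---are precisely what make the $\omega$-limit-set machinery go through exactly as in the smooth case.

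The key step is to show that $V$ is non-increasing along any solution whose orbit lies in $\Psi$. Since $V$ is $C^1$ and $x(\cdot)$ is absolutely continuous, $t\mapsto V(x(t))$ is absolutely continuous and, for almost every $t$,
\[
  \frac{d}{dt} V(x(t)) = \langle \nabla V(x), F(x)\rangle + \langle \nabla V(x), C^T\Lambda\rangle .
\]
Condition (II) bounds the first term by $0$ on $K\cap\Psi$. For the second term I would argue componentwise using complementarity: wherever $\Lambda_i>0$ we have $(Cx+d)_i=0$, hence $x\in\partial K\cap\Psi$, and condition (I) rewritten as $C\nabla V(x)\le Cx+d$ gives $(C\nabla V(x))_i\le 0$; wherever $\Lambda_i=0$ the corresponding summand vanishes. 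Therefore $\langle \nabla V(x), C^T\Lambda\rangle=\Lambda^T C\nabla V(x)\le 0$, so $\frac{d}{dt}V(x(t))\le 0$ almost everywhere, and integrating yields monotonicity. This is the abstract version of the computation carried out in \eqref{eq:inv-Psi}.

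With monotonicity in hand the LaSalle argument is routine. Because $\gamma(x_0)\subset\Psi$ and $\Psi$ is compact, the orbit is bounded and $V(x(t))$, being non-increasing and bounded below on $\Psi$, converges to some $c\in\real$. The $\omega$-limit set $\Omega$ is then nonempty, compact, invariant, and contained in $\Psi$, with $V\equiv c$ on $\Omega$ by continuity. For any $p\in\Omega$, invariance keeps the solution through $p$ inside $\Omega$ where $V$ is constant, so $\frac{d}{dt}V\equiv 0$ along it; since the two summands above are each $\le 0$ they must both vanish, and in particular $\langle \nabla V(p),F(p)\rangle=0$, i.e.\ $p\in E$. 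Thus $\Omega\subset E$, and as $\Omega$ is itself invariant it lies in the largest invariant subset $\mathcal M$ of $E$. Finally $d(x(t),\mathcal M)\le d(x(t),\Omega)\to 0$, which is the claim.

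I expect the main obstacle to be the solution-theoretic groundwork rather than the Lyapunov estimate. Invariance of $\Omega$ and constancy of $V$ on it rest on uniqueness and continuous dependence of solutions, which for complementarity systems are nonstandard and must be imported from the theory of projected dynamical systems and differential variational inequalities; one must also verify that no state jumps occur, so that solutions are genuinely absolutely continuous rather than merely of bounded variation and $\Lambda$ is an integrable function rather than a measure. The second delicate point is the interplay of condition (I) with the active-constraint structure: condition (I) constrains the full vector $C\nabla V$ on $\partial K\cap\Psi$, and care is needed to extract the correct sign on exactly the active components selected by the support of $\Lambda$.
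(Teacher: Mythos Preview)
The paper does not prove this theorem: it is stated in the appendix as a result quoted from \cite{brogliato2005krakovskii} and invoked as a black box in the proof of the main convergence theorem, so there is no in-paper proof to compare against.

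That said, your sketch is sound and is essentially the classical LaSalle argument with the one extra ingredient the complementarity structure requires, namely the sign estimate on $\langle \nabla V, C^T\Lambda\rangle$. Your derivation of that estimate from condition~(I) is correct: whenever $\Lambda_i>0$ complementarity forces $(Cx+d)_i=0$, so $x\in\partial K\cap\Psi$ and condition~(I), rewritten componentwise as $C\nabla V(x)\le Cx+d$, yields $(C\nabla V(x))_i\le 0$; the remaining components of $\Lambda$ vanish. This is precisely the abstract form of the computation the paper performs in its concrete setting at~\eqref{eq:inv-Psi}. Your identification of the real work as the solution-theoretic groundwork (absolute continuity, uniqueness, continuous dependence, absence of state jumps so that $\Lambda$ is a function rather than a measure) is also accurate; those are exactly the hypotheses the cited reference supplies and that the paper imports via the equivalence with projected dynamical systems.
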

%

%\appendices
%\renewcommand{\theequation}{A.\arabic{equation}}
%\renewcommand{\thetheorem}{A.\arabic{theorem}}
%\setcounter{section}{0}
%\renewcommand{\thesection}{Appendix \Alph{section}}

%%%%%%%%%%%%%%%%%%%%%%%%%%%%%%%%%%%%%%%%%%%%%%%%%%

%%%%%%%%%%%%%%%%%%%%%%%%%%%%%%%%%%%%%%%%%%%%%%%%%% 

% \section*{Acknowledgment}

\bibliographystyle{IEEEtran}
\bibliography{IEEEabrv,bibfile}

% biography section
% 
% If you have an EPS/PDF photo (graphicx package needed) extra braces are
% needed around the contents of the optional argument to biography to prevent
% the LaTeX parser from getting confused when it sees the complicated
% \includegraphics command within an optional argument. (You could create
% your own custom macro containing the \includegraphics command to make things
% simpler here.)
% \begin{IEEEbiography}[{\includegraphics[width=1in,height=1.25in,clip,keepaspectratio]{mshell}}]{Michael Shell}
%   or if you just want to reserve a space for a photo:

% \begin{IEEEbiography}{Tjerk Stegink}
%   Biography text here.
% \end{IEEEbiography}

% \begin{IEEEbiography}{Ashish Cherukuri}
%   Biography text here.
% \end{IEEEbiography}

% \begin{IEEEbiography}{Claudio De Persis}
%   Biography text here.
% \end{IEEEbiography}

% \begin{IEEEbiography}{Arjan van der Schaft}
%   Biography text here.
% \end{IEEEbiography}

% \begin{IEEEbiography}{Jorge Cort\'{e}s}
%   Biography text here.
% \end{IEEEbiography}

\end{document}